\theoremstyle{plain}
\newtheorem{theorem}{Theorem}[section]
\theoremstyle{plain}
\newtheorem{prop}{Proposition}[section]
\theoremstyle{definition}
\theoremstyle{remark}
\newtheorem{remark}[theorem]{Remark}
\theoremstyle{plain}
\theoremstyle{plain}
\newtheorem{lemma}[theorem]{Lemma}
\newcommand{\del}{\partial}
\def\R{{\mathbb{R}}}
\def\S{{\mathbb{S}}}
\def\N{{\mathbb{N}}}
\def\FT{{\mathcal{F}}}
\def\sA{{\mathcal{A}}}
\def\sS{{\mathcal{S}}}
\def\P{{\mathbf{P}}}
\newcommand{\G}{\mbox{$\Gamma$}}
\newcommand{\<}{\langle}
\renewcommand{\>}{\rangle}
\renewcommand{\^}[1]{\widehat{#1}}
\renewcommand{\~}[1]{\widetilde{#1}}
\begin{document}

\title[Improved LWP for quadratic derivative NLW in 2D]{Improved well-posedness for the quadratic derivative nonlinear wave equation in 2D}

\thanks{2010 {\it{Mathematics Subject Classification}}. 35L70.}

\author[Grigoryan]{Viktor Grigoryan$^1$}
\address{$^1$  Department of Mathematics and Statistics \\ Simmons College  \\  300 The Fenway, Boston, Massachusetts, 02115}
\email{grigoryan@simmons.edu}
\thanks{}

\author[Tanguay]{Allison Tanguay$^2$}
\address{$^2$  
Department of Mathematics and Statistics\\ University of Massachusetts Amherst\\ Amherst, Massachusetts, 01003}
\email{ tanguay@math.umass.edu}
\thanks{}

\begin{abstract}
In this paper we consider the Cauchy problem for the nonlinear wave equation (NLW) with quadratic derivative nonlinearities in two space dimensions. Following Gr\"{u}nrock's result in 3D, we take the data in the Fourier-Lebesgue spaces  $\^{H}_s^r$, which coincide with the Sobolev spaces of the same regularity for $r=2$, but scale like lower regularity Sobolev spaces for $1<r<2$. We show local well-posedness (LWP) for the range of exponents $s>1+\frac{3}{2r}$, $1<r\leq 2$. On one end this recovers the sharp result on the Sobolev scale, $H^{\frac{7}{4}+}$, while on the other end establishes the $\^{H}_{\frac{5}{2}}^{1+}$ result, which scales like the Sobolev $H^{\frac{3}{2}+}$, thus, corresponding to a $\frac{1}{4}$ derivative improvement on the Sobolev scale.
\end{abstract}

\maketitle

%\tableofcontents

%\newpage

%%%%%%%%%%%%%%%%%%%%%%%%%%%%%%%%%%%%%%%%%%%%%%%%%%%%%%%%%%%%%%%%%%%%%%%%%%%%%%%%%%%%%%%%%%%%%%%%%%%%%%%%%%%%%%%%%%
%%%%%%%%%%%%%%%%%%%%%%%%%%%%%%%%%%%%%%%%%%%%%%%%%%%%%%%%%%%%%%%%%%%%%%%%%%%%%%%%%%%%%%%%%%%%%%%%%%%%%%%%%%%%%%%%%%
%%%%%%%%%%%%%%%%%%%%%%%%%%%%%%%%%%%%%%%%%%%%%%%%%%%%%%%%%%%%%%%%%%%%%%%%%%%%%%%%%%%%%%%%%%%%%%%%%%%%%%%%%%%%%%%%%%

\section{Introduction}
Consider the quadratic derivative nonlinear wave equation (NLW),
\begin{equation}\label{main_eq}
 \Box u=(\del u)^2 \qquad \text{in } \R^{1+n},
\end{equation}
where $\del u$ is the space-time gradient of $u$, and no special structure is assumed in the nonlinearity. We study the local well-posedness (LWP) question for equation \eqref{main_eq} with initial data taken in Fourier-Lebesgue spaces $\^{H}_s^r$. Thus, we consider the Cauchy problem for \eqref{main_eq} with initial conditions
\begin{equation}\label{main_IC}
(u, \del_t u)|_{t=0}=(f, g)\in \^{H}_s^r\times \^{H}_{s-1}^r.
\end{equation}
Our goal is to establish local well-posedness for a range of the exponents $(r, s)$, which improves on previously known Sobolev space results. The Fourier-Lebesgue spaces $\^{H}_s^r$ have previously appeared in literature in the context of various equations, and were used to achieve improved regularity results. See for example \cite{hormander}, \cite{CVV}, \cite{vargas-vega}, \cite{grunrock:mkdv}, \cite{grunrock:nls}, \cite{christ}, \cite{grunrock-herr}, \cite{grunrock:wave}.

Equation \eqref{main_eq} is invariant under the scaling
\begin{equation}\label{scaling}
 (t, x)\mapsto (\lambda t, \lambda x).
\end{equation}
That is, if $u$ is a solution to \eqref{main_eq} in $\R^{1+n}$, then so is the scaled function $u_\lambda (t, x)=u(\lambda t, \lambda x)$. The homogeneous Sobolev norm of the initial data then scales as
\[
 \|u_\lambda(0, \cdot)\|_{\dot{H}^s(\R^n)}=\lambda^{s-\frac{n}{2}}\|u(0, \cdot)\|_{\dot{H}^s(\R^n)},
\]
and $s_c=\frac{n}{2}$ is called the critical exponent for equation \eqref{main_eq}, as the $\dot{H}^{s_c}$ norm of the initial data is preserved under its scaling. From purely scaling considerations, one might expect that local well-posedness holds for data in the Sobolev space $H^s$ for $s>s_c$ (subcritical regime), global existence holds for small data in $\dot{H}^{s_c}$ (critical regime), and some form of ill-posedness holds for data in $H^s$ for $s<s_c$ (supercritical regime). However, counterexamples of Lindblad \cite{lindblad:1}, \cite{lindblad:2}, \cite{lindblad:3} further restrict the regularity assumption on initial data for local well-posedness. In view of these counterexamples, one can only expect local well-posedness for \eqref{main_eq} for
\begin{equation}\label{s_range}
 s>\left\{\frac{n}{2}, \frac{n+5}{4}\right\}.
\end{equation}
For dimensions $n\geq 5$ this range coincides with the subcritical range $s>\frac{n}{2}$, for which local well-posedness was proved by Tataru \cite{tataru-dqnlw} using refinements of $X^{s, b}$ spaces.

In dimensions $n=4$ one can show local well-posedness for the range $s>\frac{9}{4}$, which is sharp by \eqref{s_range}, using the regular $X^{s, b}$ spaces (see \cite{foschi-klainerman:bilinear} for related bilinear estimates).

In dimensions $n=3$, Ponce and Sideris  \cite{ponce-sideris} proved LWP for $s>2$ using Strichartz estimates approach. This is again sharp in light of the counterexamples of Linblad. If the nonlinearity has a null-form structure, Klainerman and Machedon \cite{kl-mac:null}  improved the range for LWP to $s>\frac{3}{2}$, which is almost critical in 3D.

In dimension $n=2$, the sharp LWP result for $s>\frac{7}{4}$ can be again shown using the Strichartz estimates approach. As we were unable to locate a reference for this result, we sketch its easy proof in Appendix \ref{appA}. If the nonlinearity has a null-form structure, the LWP results can again be improved. For the $Q_0$ null-form, almost critical LWP was established by Klainerman and Selberg \cite{kl-sel} in the context of wave maps. For the other null forms, Zhou \cite{zhou} established LWP for data in $H^s$ with $s>\frac{5}{4}$, and also showed that this is sharp.

Recently, Gr\"{u}nrock showed in \cite{grunrock:wave} that one can eliminate the gap to the almost criticality for the equation \eqref{main_eq} in dimension $n=3$, by taking the initial data in the Fourier-Lebesgue spaces $\^{H}_s^r$. These spaces are defined as the closure of the set of Schwartz functions under the norm,
\[
 \|f\|_{\^{H}_s^r}=\|\<\xi\>^s\^{f}\|_{L_\xi^{r'}}, \qquad \frac{1}{r}+\frac{1}{r'}=1,
\]
where $\^{f}$ denotes the (spatial) Fourier transform of $f$, and\footnote{Alternatively, one can use $\<\xi\>=1+|\xi|\simeq \sqrt{1+|\xi|^2}$.} $\<\xi\>=\sqrt{1+|\xi|^2}$. The norm of the corresponding homogeneous space is $\|f\|_{\dot{\^{H}}_s^r}=\||\xi|^s\^{f}\|_{L_\xi^{r'}}$. The norms of the initial data in the homogeneous Fourier-Lebesgue spaces scale as
\[
  \|u_\lambda(0, \cdot)\|_{\dot{\^{H}}_s^r(\R^n)}=\lambda^{s-\frac{n}{r}}\|u(0, \cdot)\|_{\dot{\^{H}}_s^r(\R^n)},
\]
so the critical exponent for these spaces is $s_c^r=\frac{n}{r}$. In terms of scaling of the norms, we therefore observe the following correspondence between the homogeneous Sobolev and Fourier-Lebesgue spaces,
\begin{equation}\label{scaling_cor}
 \dot{\^{H}}_s^r\sim \dot{H}^\sigma, \qquad \text{if} \qquad \sigma=s+n\left (\frac{1}{2}-\frac{1}{r}\right ).
\end{equation}
Gr\"{u}nrock's result established LWP for data in the space $\^{H}_s^r$ for $s>\frac{2}{r}+1$, $1<r\leq 2$. This range of exponents almost reaches a critical pair at the endpoint $r=1$. He relies on free wave interaction estimates of Foschi-Klainerman \cite{foschi-klainerman:bilinear}, which have a factor of $||\tau|-|\xi||^{\frac{n-3}{2}}$. This factor becomes unbounded near the null cone $|\tau|=|\xi|$ in dimension $n=2$. Thus, Gr\"{u}nrock's result does not directly generalize to dimension $n=2$. However, if this factor can be offset by cancellations in the nonlinearity along the null-cone, then his arguments can be salvaged. This approach was explored by the first author and A. Nahmod in \cite{grig-nahmod}, who achieve almost critical LWP for the null-form problem in 2D.

The main result of this paper is the following.

\begin{theorem}\label{main_thm}
 Let $1<r\leq 2$, $s>\frac{3}{2r}+1$, then the Cauchy problem \eqref{main_eq}-\eqref{main_IC} is locally well-posed for data in the space $\^{H}_s^r\times \^{H}_{s-1}^r$. 
\end{theorem}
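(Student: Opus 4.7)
The plan is the standard contraction-mapping approach to local well-posedness, carried out in Fourier restriction (Bourgain) spaces adapted both to the wave propagator and to Fourier-Lebesgue data. Concretely, I would work with
\[
\|u\|_{X_{s,b}^{r}}=\bigl\|\<\xi\>^{s}\<|\tau|-|\xi|\>^{b}\,\~{u}(\tau,\xi)\bigr\|_{L^{r'}_{\tau,\xi}(\R^{1+2})},
\]
where $b$ is chosen slightly above $1/r$ so that $X_{s,b}^{r}\embed C([0,T];\^{H}_{s}^{r})$, and one has the standard linear estimates for the free evolution and the Duhamel term with a $T^{\delta}$ gain coming from $b-1+\epsilon<0$. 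Contraction on a suitable ball then reduces Theorem \ref{main_thm} to proving the bilinear estimate
\[
\|\del u\cdot\del v\|_{X_{s-1,\,b-1+\epsilon}^{r}}\lesssim \|u\|_{X_{s,b}^{r}}\|v\|_{X_{s,b}^{r}},
\]
for some small $\epsilon>0$ and all $u,v\in X_{s,b}^{r}$.

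To establish the bilinear estimate I would dyadically decompose each factor and the output in both the spatial frequency $|\xi|$ (scales $N,N_{1},N_{2}$) and the modulation $||\tau|-|\xi||$ (scales $L,L_{1},L_{2}$), place the derivatives on the factor of highest frequency, and, by almost-orthogonality arguments, reduce matters to bounding $L^{r'}_{\tau,\xi}$ norms of convolutions of single cone-thickened dyadic pieces. The main input is then a Fourier-Lebesgue version of the 2D Foschi-Klainerman bilinear estimate; I would derive this by interpolating the classical $L^{2}$ bilinear restriction bound, available at $r=2$, against a trivial $L^{\infty}$ convolution bound obtained from Young's inequality at $r=1$, after appropriate localization. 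Regardless of how this interpolation is set up, the resulting bound inevitably carries the $||\tau|-|\xi||^{-1/2}$-type loss associated with the factor $||\tau|-|\xi||^{(n-3)/2}$ in dimension $n=2$.

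The principal obstacle is precisely this null-cone singular factor, which is the same obstruction noted in the introduction that prevents Gr\"{u}nrock's 3D argument from transferring directly to the present 2D setting. In the Fourier-Lebesgue framework, however, this factor is not absorbed by cancellations but rather by the improved integrability provided by taking $r'<\infty$: summing $L_{\min}^{-1/2}$ together with the modulation weights $L^{b-1+\epsilon},L_{1}^{b},L_{2}^{b}$ in $L^{r'}$ converges in a range of $b$ compatible with $b>1/r$, and this consistency condition is exactly what forces the lower bound $r>\frac{3}{2}$. Once the modulation sums are controlled, a case analysis of the remaining frequency interactions (high-high-to-low, high-low-to-high, and high-high-to-high) shows that the resonant high-high-to-low regime saturates the argument, and the threshold $s>\frac{3}{2r}+1$ emerges as exactly the condition that makes the residual dyadic sum in $N$ convergent, with the non-resonant interactions handled with room to spare.
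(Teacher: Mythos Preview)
Your overall framework (contraction in $X_{s,b}^{r}$ spaces, reduction to a bilinear estimate, dyadic decomposition in frequency and modulation) matches the paper's, but the heart of your argument --- how you intend to prove the bilinear bound --- has a genuine gap.

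You propose to obtain the needed $\^{L}^{r}$ bilinear bound by interpolating the $r=2$ (Selberg/Foschi--Klainerman) estimate against a trivial Young-type bound at $r=1$, accepting the 2D singular factor $||\tau|-|\xi||^{-1/2}$ and then absorbing it through the $L^{r'}$ integrability. This does not yield the theorem. A straightforward bilinear interpolation of the sharp $r=2$ localized bound
\[
C_{2}\sim (N_{\min}^{012})^{1/2}(N_{\min}^{12})^{1/4}(L_{\min}^{12})^{1/2}(L_{\max}^{12})^{1/4}
\]
against the trivial $r=1$ bound $C_{1}\sim (N_{\min}^{12})^{2}L_{\min}^{12}$ gives, in the high--low--high configuration, a constant of size $N_{1}^{-1/2+5/(2r)}L_{\min}^{1/r}L_{\max}^{1/2-1/(2r)}$ rather than the paper's $N_{1}^{3/(2r)}L_{\min}^{1/r}L_{\max}^{1/(2r)}$. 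After summing the modulations this forces $\sigma>-\tfrac12+\tfrac{5}{2r}$, i.e.\ $s>\tfrac12+\tfrac{5}{2r}$, which is strictly weaker than $s>1+\tfrac{3}{2r}$ for every $r<2$; moreover, nothing in this computation singles out $r>\tfrac32$. Your heuristic that the $L^{-1/2}$ loss is compensated by $r'<\infty$ and that this is ``exactly'' what produces $r>\tfrac32$ is therefore incorrect on both counts.

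What the paper actually does is avoid Foschi--Klainerman entirely and prove the $\^{L}^{r}$ bilinear restriction bound \emph{directly}, by extending Selberg's 2D volume/angular--sector machinery to the asymmetric $L^{r}\times L^{r'}\times L^{r'}$ setting (Lemma~\ref{rest_triv} and Proposition~\ref{rest}). The key ``hard'' bound \eqref{bi_hard} is obtained via a Whitney-type decomposition of the inputs into angular sectors of aperture $\gamma$; the restriction $r>\tfrac32$ arises precisely from summing the resulting pieces over dyadic $\gamma>\gamma_{0}$, which requires the exponent $\tfrac1r-\tfrac2{p}=\tfrac3r-2$ to be negative. No interpolation is used, and the singular cone factor never appears because one works with thickened cone pieces from the outset. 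If you want to carry out your plan, this angular decomposition --- not interpolation --- is the missing ingredient.
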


\begin{remark}
 Well-posedness here is understood in the sense of Theorem \ref{gen_LWP}, with the solution found in the space $Z_{s, b}^r$, which will be defined in the next section.
\end{remark}

\begin{remark}
 At one extreme, $(s, r)=(\frac{7}{4}, 2)$, our result coincides with the local well-posedness for data in $H^{\frac{7}{4}+}$, while at the other extreme, $(s, r)=(\frac{5}{2}, 1)$, we obtain local well-posedness in the space $\^{H}_{\frac{5}{2}}^{1+}$. By the scaling correspondence \eqref{scaling_cor}, this last space scales like the Sobolev space $H^{\frac{3}{2}+}$, which gives a $\frac{1}{4}$ derivative improvement on the $H^{\frac{7}{4}+}$ Sobolev data result.
\end{remark}

\begin{remark}\label{other_prob}
 An analogous result to Theorem \ref{main_thm} can be proved for the equation
\begin{equation}\label{du2}
 \Box u=\del(u^2).
\end{equation}
This equation is invariant under the scaling $u\mapsto u_\lambda$, where $u_\lambda(t, x)=\lambda u(\lambda t, \lambda x)$. And the critical exponent on the Sobolev scale is $s_c=\frac{n}{2}-1$, while on the Fourier-Lebesgue scale it is $s_c^r=\frac{n}{r}-1$. The best known result for \eqref{du2} in dimension $n=2$ for data in $H^s$ is for $s>\frac{3}{4}$, which, again, can be shown using Strichartz estimates. A similar argument to the one we use for equation \eqref{main_eq} will show that the Cauchy problem for \eqref{du2} with data in $\^{H}_s^r$ is locally well-posed for $s>\frac{3}{2r}$, $1<r\leq 2$.
\end{remark}

The region for the parameters $(s, \frac{1}{r})$, for which Theorem \ref{main_thm} holds is shaded in Figure 1. Notice that the bottom and right edges of the region are not included. The $s$-line $s=\frac{3}{2r}+1$ almost reaches the point $(s, r)=(\frac{5}{2}, 1)$, which corresponds to the Sobolev space $H^{\frac{3}{2}}$. This is still above the critical regularity, as can be seen from the picture, where the solid line represents the critical relation $s_c^r=\frac{2}{r}$.

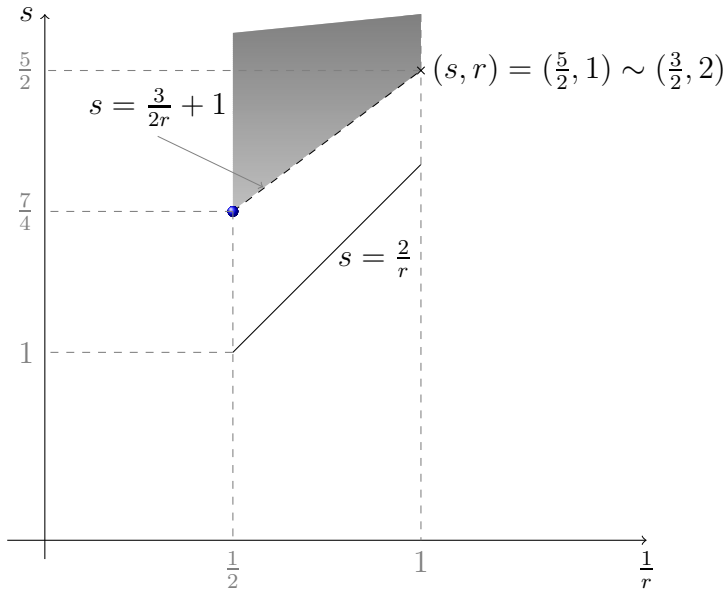
\begin{figure}[h]
\begin{tikzpicture}[x=5cm, y=2.5cm]
%\draw[very thin,color=gray] (-0.1,-0.1) grid (1.2,3.1);
\draw[->] (-0.1,0) -- (1.6,0) node[below] {$\frac{1}{r}$};
\draw[->] (0,-0.1) -- (0,2.8) node[left] {$s$};
\shade[top color=gray,bottom color=gray!50]
      (0.5,1.755) -- (0.5, 2.7) -- (1, 2.8) -- (1, 2.5) -- cycle;
\draw[dashed, domain=0.5:1] plot (\x,1.5*\x+1);
\draw[domain=0.5:1] plot (\x,2*\x);

\draw (0.3, 2.15) node[above] {$s=\frac{3}{2r}+1$};
\draw[->, very thin, color=gray] (0.3, 2.15) -- (0.58, 1.88); 

\draw (0.75, 1.5) node[right] {$s=\frac{2}{r}$};

\draw plot[only marks,mark=ball, mark options={color=black}] coordinates{(0.5,1.75)};
%\draw[color=black] plot[only marks,mark=ball] coordinates{(0.6666,2) (0.5,1)};

\draw[color=gray, dashed] (0.5,1.75) -- (0.5,0) node[below] {$\frac{1}{2}$};
\draw[color=gray, dashed] (1,2.8) -- (1,0)  node[below] {1};
\draw[color=gray, dashed] (0.5,1) -- (0,1)  node[left] {$1$};
\draw[color=gray, dashed] (0.5,1.75) -- (0,1.75)  node[left] {$\frac{7}{4}$};
\draw[color=gray, dashed] (1,2.5) -- (0,2.5)  node[left] {$\frac{5}{2}$};

\draw[color=black] plot[only marks,mark=x] coordinates{(1,2.5)};
\draw (1, 2.5) node[right] {$(s, r)=(\frac{5}{2}, 1) \sim (\frac{3}{2}, 2)$};
\end{tikzpicture}
\caption{The shaded region represents the range of indeces for which LWP for data in space $\^{H}_s^r\times \^{H}_{s-1}^r$ holds.}
\end{figure}

The rest of the paper is organized as follows. in Section \ref{sec2} we introduce the solution spaces, and reduce Theorem \ref{main_thm} to a bilinear estimate. This reduction is achieved by utilizing the general LWP theorem stated in Appendix \ref{appB}. In Section \ref{sec3} we establish bilinear Fourier restriction estimates in $\^{L}^r$ spaces. These estimates are used in Section \ref{sec4} to establish the main bilinear estimate \eqref{key}.

As was mentioned earlier, we also sketch the proof of the LWP for equation \eqref{main_eq} with data in the Sobolev space $H^{\frac{7}{4}+}\times H^{\frac{3}{4}+}$ by the Strichartz estimates in Appendix \ref{appA}.

\vspace{10pt}
\textbf{Acknowledgments:} The authors would like to thank Andrea Nahmod for helpful discussions and her encouragement during this project.

The first author would also like to thank Magdalena Czubak for showing him the proof of the $H^{\frac{7}{4}+}$ LWP via the Strichartz estimates, which is sketched in Appendix \ref{appA}.

We also thank the anonymous referee for numerous helpful suggestions.

%%%%%%%%%%%%%%%%%%%%%%%%%%%%%%%%%%%%%%%%%%%%%%%%%%%%%%%%%%%%%%%%%%%%%%%%%%%%%%%%%%%%%%%%%%%%%%%%%%%%%%%%%%%%%%%%%%
%%%%%%%%%%%%%%%%%%%%%%%%%%%%%%%%%%%%%%%%%%%%%%%%%%%%%%%%%%%%%%%%%%%%%%%%%%%%%%%%%%%%%%%%%%%%%%%%%%%%%%%%%%%%%%%%%%
%%%%%%%%%%%%%%%%%%%%%%%%%%%%%%%%%%%%%%%%%%%%%%%%%%%%%%%%%%%%%%%%%%%%%%%%%%%%%%%%%%%%%%%%%%%%%%%%%%%%%%%%%%%%%%%%%%

\section{The $X_{s, b}^r$ space, reduction to a bilinear estimate}\label{sec2}
We will obtain the local in time solution via a contraction principle in the time restricted $X_{s, b}^r$ space. This space is a Fourier-$L^{r'}$ analogue of the wave-Sobolev space $X^{s, b}$. It is defined by its norm
\[
 \|u\|_{X_{s, b}^r}=\|\<\xi\>^s\<|\tau|-|\xi|\>^b\~{u}\|_{L_{\tau, \xi}^{r'}},
\]
where $\~{u}$ stands for the time-space Fourier transform of $u$. The time restricted space is defined as
\[
 X_{s, b; T}^r=\left \{u=U|_{[-T, T]\times \R^n} : U\in X_{s, b}^r\right \},
\]
with its norm given by
\[
 \|u\|_{X_{s, b; T}^r}=\inf \left \{\|U\|_{X_{s, b}^r} : U|_{[-T, T]\times \R^n}=u \right \}.
\]

If $b>\frac{1}{r}$, the so-called transfer principle allows one to transfer multilinear estimates for free solutions of the homogeneous equation $\Box u=0$ with data in $\^{H}_s^r$ spaces to general elements of $X_{s, b}^r$ spaces. A trivial consequence of the transfer principle is the following crucial embedding
\[
 X_{s, b}^r\subset C(\R, \^{H}_s^r), \qquad \text{thus, also} \qquad X_{s, b; T}^r\subset C([-T, T], \^{H}_s^r),
\]
which guarantees that the solutions found in the $X_{s, b}^r$ spaces are the proper continuations of the initial data in $\^{H}_s^r$ spaces. For details on the transfer principle see \cite{selberg_thesis} for $L^2$ based spaces, \cite{grunrock:mkdv} for the linear case on the $X_{s, b}^r$ spaces, and \cite{grig-nahmod} for the $X_{s, b}^r$ multilinear case.

We need to separately estimate the time derivative of the solution, since the wave operator is of second order in time. For this, we define our solution space, $Z_{s, b}^r$, by its norm
\[
 \|u\|_{Z_{s, b}^r}=\|u\|_{X_{s, b}^r}+\|\del_t u\|_{X_{s-1, b}^r}.
\]
The time restricted space $Z_{s, b; T}^r$ and its norm are defined similar to the $X_{s, b; T}^r$ space above.

We will also use the simplified notation $\^{L}^r_{t, x}=X_{0, 0}^r$, and $\^{L}^r=\^{H}_0^r$, with the last norm taken either with respect to the time, or space variables.

Relying on the general local well-posedness Theorem \ref{gen_LWP}, we will prove Theorem \ref{main_thm} by establishing the following two estimates
\begin{equation}\label{N_X}
 \|\del u \del v\|_{X_{s-1, b+\epsilon-1}^r}\lesssim \|u\|_{Z_{s, b}^r}\|v\|_{Z_{s, b}^r},
\end{equation}
and
\begin{equation}\label{N_X_cont}
 \|(\del u)^2- (\del v)^2\|_{X_{s-1, b+\epsilon-1}^r}\lesssim \left (\|u\|_{Z_{s, b}^r}+\|v\|_{Z_{s, b}^r}\right )\|u-v\|_{Z_{s, b}^r}.
\end{equation}
The second estimate immediately follows from the first one due to the quadratic nature of the nonlinearity. Thus, we only need to prove estimate \eqref{N_X}.

If $b+\epsilon-1< 0$, it is easy to see that estimate \eqref{N_X} will follow from the estimate
\begin{equation}\label{key}
 \|uv\|_{X_{\sigma, 0}^r}\lesssim \|u\|_{X_{\sigma, b}^r}\|v\|_{X_{\sigma, b}^r},
\end{equation}
where $\sigma=s-1$, and $u, v$ are now general elements of $X_{\sigma, b}^r$. The rest of this paper is dedicated to proving \eqref{key} for $\sigma>\frac{3}{2r}$, $1<r\leq 2$, and some $b, \epsilon$, with $\frac{1}{r}<b<1$, $\epsilon\in (0, 1-b)$.

\begin{remark}
 In \cite{grunrock:wave} Gr\"{u}nrock relies on the general local well-posedness theorem for the first order equation $\del_t u-i\phi(D)u=N(u)$, which he proved in the earlier paper \cite{grunrock:mkdv}. For this, he reformulates equation \eqref{main_eq} as a system of first order equations for $u_\pm=u\pm i(1-\Delta_x)^{-\frac{1}{2}}\del_t u$. If we were to use this approach in our context, and the nonlinearity contains time derivatives of $u$, we would have to place the left hand sides of estimates \eqref{N_X}-\eqref{N_X_cont} in the space $Z_{s-1, b+\epsilon-1}^r$. These estimates in addition to \eqref{key} would also require the estimate
\begin{equation}\label{key2}
 \|uv\|_{X_{\sigma-1, b+\epsilon}^r}\lesssim \|u\|_{X_{\sigma, b}}\|v\|_{X_{\sigma, b}}.
\end{equation}
This is due to the fact that the absolute value of the symbol of the first order operator, $|\tau-\phi(\xi)|$, does not control $|\tau|$. However, Gr\"{u}nrock uses the transfer principle and does his estimates only on free solutions, which allows an easy control of the time derivatives by the spatial derivatives, and essentially produces the three dimensional analogue of \eqref{key2} for free from \eqref{key}. As we mentioned before, this approach fails in two dimensions. Nevertheless, one can indeed show \eqref{key2} with the arguments we employ here and a hyperbolic Leibniz type rule. See \cite{tanguay-thesis} for the details, where also more general estimates of the form \eqref{N_X} are proved.
\end{remark}

%%%%%%%%%%%%%%%%%%%%%%%%%%%%%%%%%%%%%%%%%%%%%%%%%%%%%%%%%%%%%%%%%%%%%%%%%%%%%%%%%%%%%%%%%%%%%%%%%%%%%%%%%%%%%%%%%%
%%%%%%%%%%%%%%%%%%%%%%%%%%%%%%%%%%%%%%%%%%%%%%%%%%%%%%%%%%%%%%%%%%%%%%%%%%%%%%%%%%%%%%%%%%%%%%%%%%%%%%%%%%%%%%%%%%
%%%%%%%%%%%%%%%%%%%%%%%%%%%%%%%%%%%%%%%%%%%%%%%%%%%%%%%%%%%%%%%%%%%%%%%%%%%%%%%%%%%%%%%%%%%%%%%%%%%%%%%%%%%%%%%%%%

\section{Bilinear Fourier restriction estimates}\label{sec3}
By duality, we can reformulate the multiplicative estimate \eqref{key} as the trilinear integral estimate
\begin{equation}\label{key_dual}
 I\lesssim \|F_0\|_{L^r_{\tau, \xi}}\|F_1\|_{L^p_{\tau, \xi}}\|F_2\|_{L^p_{\tau, \xi}},
\end{equation}
where
\begin{equation*}
 I=\iiint \frac{F_0(X_0)F_1(X_1)F_2(X_2)\delta(X_0+X_1+X_2)\, dX_0 dX_1 dX_2}{\<\xi_0\>^{-\sigma}\<\xi_1\>^{\sigma}\<\xi_2\>^{\sigma}\<|\tau_1|-|\xi_1|\>^b\<|\tau_2|-|\xi_2|\>^b},
\end{equation*}
with $X_j=(\tau_j, \xi_j)$ for $j=0, 1, 2$; $F_1$, $F_2$ stand for
\begin{align*}
 F_1(\tau_1, \xi_1) & =\<\xi_1\>^\sigma\<|\tau_1|-|\xi_1|\>^b\~{u}(\tau_1, \xi_1),\\
 F_2(\tau_2, \xi_2) & =\<\xi_2\>^\sigma\<|\tau_2|-|\xi_2|\>^b\~{v}(\tau_2, \xi_2);
\end{align*}
and, to simplify notation, we use $p=r'$.

Since $\xi_0+\xi_1+\xi_2=0$ in the above integral, by triangle inequality, $|\xi_j|\leq |\xi_k|+|\xi_l|$ for all permutations $(j, k, l)$ of $(0, 1, 2)$. Hence, also, $\<\xi_j\>\lesssim \<\xi_k\>+\<\xi_l\>$. This implies that only low-high-high (LHH), high-low-high (HLH), and high-high-low (HHL) interactions are permitted. We then split $I=I_{\text{LHH}}+I_{\text{HLH}}+I_{\text{HHL}}$, with the following correspondence:
\begin{align*}
 \text{LHH} \quad & \leftrightarrow \quad \<\xi_0\> \lesssim \<\xi_1\>\sim \<\xi_2\>\\
 \text{HLH} \quad & \leftrightarrow \quad \<\xi_1\> \lesssim \<\xi_0\>\sim \<\xi_2\>\\
 \text{HHL} \quad & \leftrightarrow \quad \<\xi_2\> \lesssim \<\xi_0\>\sim \<\xi_1\>.
\end{align*}

The letters $M, N, L$ and their indexed counterparts will denote dyadic numbers of the form $2^j, j\in\{0, 1, 2, \dots\}$, and we will use the following notation for Fourier restrictions.
\begin{align*}
 F^N(X) & =\chi_{\<\xi\>\sim N}F(X),\\
 F^{N, L}(X) & =\chi_{\<|\tau|-|\xi|\>\sim L}F^N(X),\\
 F^{N, L, \pm}(X) & = \chi_{\pm \tau\geq 0} F^{N, L}(X).
\end{align*}
One immediately has the following dyadic summation estimates:
\begin{align*}
 \sum_{N}\|F^N\|_{L^p}^p & \sim \|F\|_{L^p}^p,\\
 \sum_{L}\|F^{N, L}\|_{L^p}^p & \sim \|F^N\|_{L^p}^p,\\
 \sum_{L}\|F^{N, L, \pm}\|_{L^p}^p & \lesssim \|F^N\|_{L^p}^p.
\end{align*}

Using the trilinear convolution form 
\begin{equation}\label{J}
 J(F_0, F_1, F_2)=\iiint F_0(X_0)F_1(X_1)F_2(X_2)\delta(X_0+X_1+X_2)\, dX_0 dX_1 dX_2,
\end{equation}
we have
\begin{equation}\label{I_localized}
 I\lesssim \sum_{\bf N, L}\frac{J(F_0^{N_0}, F_1^{N_1, L_1}, F_2^{N_2, L_2})}{N_0^{-\sigma}N_1^\sigma N_2^\sigma L_1^b L_2^b},
\end{equation}
where ${\bf N}=(N_0, N_1, N_2)$, ${\bf L}=(L_1, L_2)$. The proof of estimate \eqref{key} will rely on appropriate bounds for $J(F_0^{N_0}, F_1^{N_1, L_1}, F_2^{N_2, L_2})$ by $\|F_0^{N_0}\|_{L^r}\|F_1^{N_1, L_1}\|_{L^p}\|F_2^{N_2, L_2}\|_{L^p}$, with the constants depending on ${\bf N}$ and ${\bf L}$ in a way that allows summing on the right hand side in the last estimate, and, thus, establishing \eqref{key_dual}. For this, we need bilinear Fourier restriction estimates on $\R^{1+2}$ of the form
\begin{equation}\label{bi_triv}
 \|\P_{A_0}(\P_{A_1}u_1\cdot \P_{A_2}u_2)\|_{\^{L}^r}\leq C\|\P_{A_1}u_1\|_{\^{L}^r}\|\P_{A_2}u_2\|_{\^{L}^r},
\end{equation}
where $A_0, A_1, A_2\subset\R^{1+2}$ are measurable sets, and the projection $\P_A$ is defined by $\~{\P_A u}=\chi_A\~{u}$. We are interested in such restriction estimates with $A_0, A_1$ and $A_2$ being thickened subsets of the null cone $K=\{(\tau, \xi)\in\R^{1+2}\,:\,|\tau|=|\xi|\}$. Following Selberg \cite{selberg_bil_3d, selberg_bil_2d}, we introduce the following notation for thickened upper ($\tau\geq 0$) and lower ($\tau\leq 0$) cones, truncated in the spatial frequency $\xi$ by balls, annuli and angular sectors:
\begin{align*}
 K_{N, L}^\pm & = \{(\tau, \xi)\in\R^{1+2}\,:\, |\xi|\lesssim N, \tau=\pm|\xi|+O(L)\},\\
 \dot{K}_{N, L}^\pm & = \{(\tau, \xi)\in\R^{1+2}\,:\, |\xi|\sim N, \tau=\pm|\xi|+O(L)\},\\
 \dot{K}_{N, L, \gamma}^\pm(\omega) & = \{(\tau, \xi)\in\dot{K}_{N, L}^\pm\,:\,\theta(\pm\xi, \omega)\leq \gamma\},
\end{align*}
where $N, L, \gamma>0$; $\omega$ is an element of the unit circle, $\omega\in\S^1\subset \R^2$; and $\theta(a, b)$ denotes the angle between nonzero $a, b\in\R^2$. We will also use
\begin{align*}
 K_{N} & = \{(\tau, \xi)\in\R^{1+2}\,:\, |\xi|\lesssim N\},\\
 \dot{K}_{N} & = \{(\tau, \xi)\in\R^{1+2}\,:\, |\xi|\sim N\},\\
 \dot{K}_{N, \gamma}(\omega) & = \{(\tau, \xi)\in\dot{K}_{N}\,:\,\theta(\xi, \omega)\leq \gamma\}.
\end{align*}

By duality, \eqref{bi_triv} is equivalent to
\begin{equation}\label{dual_form}
 J(F_0^{-A_0}, F_1^{A_1}, F_2^{A_2})\leq C\|F_0^{-A_0}\|_{L^r}\|F_1^{A_1}\|_{L^p}\|F_2^{A_2}\|_{L^p},
\end{equation}
where $F^A(X)=\chi_A(X)F(X)$; $F_j=\~{u}_j, j=1, 2$; and $J$ is given by \eqref{J}.

We start with the following elementary lemma, which is a direct extension of \cite[Lemma 1.1]{selberg_bil_2d} to $\^{L}^r$ (see also \cite[Lemma 3.1]{selberg_bil_3d}). We use $|E|$ for the volume of the set $E\subset\R^{1+2}$.
\begin{lemma}\label{rest_triv}
 The estimate \eqref{bi_triv} holds with $C$ equal to an absolute constant times
\begin{align*}
 \min \bigg{(}\sup_{X\in A_0}|A_1\cap (X-A_2)|^{\frac{1}{r}}, & \sup_{X\in A_1}|A_0\cap (X+A_2)|^{\frac{1}{p}}|A_2|^{\frac{1}{r}-\frac{1}{p}},\\
& \qquad \sup_{X\in A_2}|A_0\cap (X+A_1)|^{\frac{1}{p}}|A_1|^{\frac{1}{r}-\frac{1}{p}}\bigg{)},
\end{align*}
provided this quantity is finite.
\end{lemma}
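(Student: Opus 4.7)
The plan is to work in the dual formulation \eqref{dual_form}, i.e.\ to bound $J(F_0^{-A_0}, F_1^{A_1}, F_2^{A_2}) \leq C\,\|F_0^{-A_0}\|_{L^r}\|F_1^{A_1}\|_{L^p}\|F_2^{A_2}\|_{L^p}$, and to prove each of the three candidate values of $C$ separately by integrating the variables in a different order. Throughout, $p=r'$ and $r \leq 2 \leq p$, so the exponent $r/p \leq 1$ will be available at a key step.

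For the first bound I would parametrize $X_2 = -X_0-X_1$, so that $J = \int_{-A_0} F_0(X_0)\,G(X_0)\,dX_0$ with $G(X_0) = \int F_1(X_1) F_2(-X_0-X_1)\chi_{A_1}(X_1)\chi_{A_2}(-X_0-X_1)\,dX_1$. Hölder in $X_0$ with exponents $(r,p)$ peels off $\|F_0^{-A_0}\|_{L^r}$. For fixed $X_0$, a second Hölder in $X_1$ with the indicator of $A_1\cap(-X_0-A_2)$ placed in $L^r$ produces the factor $|A_1\cap(-X_0-A_2)|^{1/r}$ times an $L^p$-factor. Taking $L^p$ in $X_0$ extracts $\sup_{X\in A_0}|A_1\cap(X-A_2)|^{1/r}$, and the remaining integral $\iint |F_1|^p|F_2(-X_0-X_1)|^p\chi_{A_1}\chi_{A_2}\chi_{-A_0}\,dX_0\,dX_1$ factors via Fubini (after substituting $X_2 = -X_0-X_1$) into $\|F_1^{A_1}\|_{L^p}^p\|F_2^{A_2}\|_{L^p}^p$.

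For the third bound I would instead apply Hölder in $X_1$ with exponents $(p,r)$ to peel off $\|F_1^{A_1}\|_{L^p}$, leaving $\|\tilde G\|_{L^r(A_1)}$ with $\tilde G(X_1) = \int_{D(X_1)} F_0(-X_1-X_2)F_2(X_2)\,dX_2$ and $D(X_1) = A_2\cap (A_0 - X_1)$. For fixed $X_1$, a pointwise Hölder in $X_2$ with exponents $(r,p)$ gives $|\tilde G(X_1)|^r \leq \|F_0\|_{L^r(E(X_1))}^r\,\|F_2\|_{L^p(D(X_1))}^r$, where $E(X_1) = (-A_0)\cap(-X_1-A_2)$. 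The $F_0$ factor is trivially $\leq \|F_0^{-A_0}\|_{L^r}^r$. For the $F_2$ factor, setting $h(X_1) = \|F_2\|_{L^p(D(X_1))}^p$ and invoking the sub-additivity Hölder bound (valid because $r/p \leq 1$)
\[
\int_{A_1} h(X_1)^{r/p}\,dX_1 \leq |A_1|^{1-r/p}\Bigl(\int_{A_1} h(X_1)\,dX_1\Bigr)^{r/p},
\]
the integral on the right collapses, via Fubini, to $\int_{A_2}|F_2(X_2)|^p|A_1\cap(A_0-X_2)|\,dX_2 \leq \sup_{X\in A_2}|A_0\cap(X+A_1)|\cdot\|F_2^{A_2}\|_{L^p}^p$. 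Collecting exponents yields precisely $|A_1|^{1/r-1/p}\sup_{X\in A_2}|A_0\cap(X+A_1)|^{1/p}$. The second bound follows by the symmetric argument with the roles of $(A_1,F_1,X_1)$ and $(A_2,F_2,X_2)$ interchanged in $J$.

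The only nonroutine step is the sub-additivity inequality above: without it, peeling off the factors via Hölder only produces the first bound, and it is precisely this $r/p\leq 1$ Hölder that creates the $|A_j|^{1/r-1/p}$ factors and exchanges the remaining volume from exponent $1/r$ to $1/p$. Everything else reduces to two-fold Hölder and Fubini, with care taken that the substitutions $X_2 = -X_0 - X_1$, $X_0 = -X_1-X_2$, etc., correctly translate the restriction sets ($-A_0$, $A_1$, $A_2$) into the intersections $A_0\cap(X+A_i)$ that appear in the statement.
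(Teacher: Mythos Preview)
Your argument is correct. For the first bound you and the paper proceed identically. For the second and third bounds your route differs from the paper's, though both are valid and both rely on $r\le p$.

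The paper's approach for the second bound is more direct: writing $J=\int_{X_1}\int_{X_0}\chi_{A_0\cap(X_1+A_2)}(X_0)\,F_0^{-A_0}(-X_0)F_2^{A_2}(X_0-X_1)F_1^{A_1}(X_1)\,dX_0\,dX_1$, it applies H\"older in $X_0$ by pairing the indicator (in $L^p$) against the \emph{product} $F_0F_2$ (in $L^r$), which immediately produces the factor $|A_0\cap(X_1+A_2)|^{1/p}$. After taking the supremum over $X_1\in A_1$, a second H\"older in $X_1$ peels off $\|F_1\|_{L^p}$, and Fubini factors the remaining double integral into $\|F_0^{-A_0}\|_{L^r}\,\|F_2^{A_2}\|_{L^r}$. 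The conversion $\|F_2^{A_2}\|_{L^r}\le |A_2|^{1/r-1/p}\|F_2^{A_2}\|_{L^p}$ is then a one-line support H\"older. Your approach instead keeps $F_2$ in $L^p$ from the start, retains the $X_1$-dependence in $h(X_1)=\|F_2\|_{L^p(D(X_1))}^p$, and invokes the concavity bound $\int_{A_1}h^{r/p}\le |A_1|^{1-r/p}(\int_{A_1}h)^{r/p}$ to close. This is a perfectly legitimate alternative, and arguably retains slightly more structure (the Fubini step averages $|A_1\cap(A_0-X_2)|$ against $|F_2|^p$ before taking a supremum), but for the lemma as stated the paper's argument is shorter and avoids the extra sub-additivity step.
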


\begin{proof}
 For the first bound, we use the dual formulation, and rewrite
\begin{align*}
 & J(F_0^{-A_0}, F_1^{A_1}, F_2^{A_2})\\
 &\qquad\qquad=\int_{X_0}\int_{X_1} \chi_{X_1\in A_1\cap(X_0-A_2)}\chi_{X_0\in A_0}F_1^{A_1}(X_1)F_2^{A_2}(X_0-X_1)F_0^{-A_0}(-X_0)\, dX_1 dX_0\\
 & \qquad\qquad\quad \lesssim \int_{X_0} \left [\sup_{X_0\in A_0}|A_1\cap(X_0-A_2)|^{\frac{1}{r}}\right ]\left [\int_{X_1} \left (F_1^{A_1}(X_1)F_2^{A_2}(X_0-X_2)\right )^p\, dX_1\right ]^{\frac{1}{p}}\\
 &\hspace{4.6in}\times F_0^{-A_0}(-X_0)\, dX_0,
\end{align*}
where we used H\"{o}lder's inequality in the $X_1$ variable. Now applying H\"{o}lder's inequality in the $X_0$ variable and using Fubini's theorem, we bound the above by
\begin{align*}
 & J(F_0^{-A_0}, F_1^{A_1}, F_2^{A_2}) \lesssim \sup_{X_0\in A_0}|A_1\cap(X_0-A_2)|^{\frac{1}{r}}\\
 &\quad \times \left [\int_{X_0}\int_{X_1} \left (F_1^{A_1}(X_1)\right )^p \left (F_2^{A_2}(X_0-X_1)\right )^p\, dX_1 dX_0\right ]^{\frac{1}{p}}\left [\int_{X_0} \left (F_0^{-A_0}(-X_0)\right )^{r}\, dX_0\right ]^{\frac{1}{r}}\\
 & \qquad\qquad\lesssim \sup_{X_0\in A_0}|A_1\cap(X_0-A_2)|^{\frac{1}{r}} \\
 &\qquad\quad\qquad\times \left [\int_{X_1} \left (F_1^{A_1}(X_1)\right )^p \int_{X_0} \left (F_2^{A_2}(X_0-X_1)\right )^p\, dX_0 dX_1\right ]^{\frac{1}{p}} \|F_0^{-A_0}\|_{L^{r}}\\
 &\qquad\quad\qquad\qquad\lesssim \sup_{X_0\in A_0}|A_1\cap(X_0-A_2)|^{\frac{1}{r}} \|F_0^{-A_0}\|_{L^{r}}\|F_1^{A_1}\|_{L^p}\|F_2^{A_2}\|_{L^p}.
\end{align*}

The proofs of the second and third bounds are similar, so we prove only the second bound. Using the dual formulation and H\"{o}lder's inequality in $X_0$, we have
\begin{align*}
 & J(F_0^{-A_0}, F_1^{A_1}, F_2^{A_2})\\
 & \qquad\qquad=\int_{X_1}\int_{X_0} \chi_{X_0\in A_0\cap(X_1+A_2)}\chi_{X_1\in A_1}F_1^{A_1}(X_1)F_2^{A_2}(X_0-X_1)F_0^{-A_0}(-X_0)\, dX_1 dX_0\\
 & \qquad\qquad \lesssim \int_{X_1} \left [\sup_{X_1\in A_1}|A_0\cap(X_1+A_2)|^{\frac{1}{p}}\right ]\left [\int_{X_0} \left (F_0^{-A_0}(-X_0)F_2^{A_2}(X_0-X_1)\right )^{r}\, dX_0\right ]^{\frac{1}{r}}\\
 &\hspace{4.8in}\times F_1^{A_1}(X_1)\, dX_1.
\end{align*}
 Now applying H\"{o}lder's inequality in the $X_1$ variable gives
\begin{align*}
 & J(F_0^{-A_0}, F_1^{A_1}, F_2^{A_2}) \lesssim \sup_{X_1\in A_1}|A_0\cap(X_1+A_2)|^{\frac{1}{p}}\\
 & \qquad\qquad\qquad\times\left [\int_{X_1}\int_{X_0} \left (F_0^{-A_0}(-X_0)\right )^{r} \left (F_2^{A_2}(X_0-X_1)\right )^{r}\, dX_0 dX_1\right ]^{\frac{1}{r}}\|F_1^{A_1}\|_{L^{p}}\\
 &\qquad\qquad\lesssim \sup_{X_1\in A_1}|A_0\cap(X_1+A_2)|^{\frac{1}{p}} \|F_0^{-A_0}\|_{L^{r}}\|F_2^{A_2}\|_{L^{r}}\|F_1^{A_1}\|_{L^p}\\
 &\qquad\qquad\qquad \lesssim\sup_{X_1\in A_1}|A_0\cap(X_1+A_2)|^{\frac{1}{p}} |A_2|^{\frac{1}{r}-\frac{1}{p}}\|F_0^{-A_0}\|_{L^{r}}\|F_1^{A_1}\|_{L^p}\|F_2^{A_2}\|_{L^p},
\end{align*}
where in the last step we used the fact that $r<p$ and H\"{o}lder's inequality to bound $\|F_2^{A_2}\|_{L^{r}}\lesssim |A_2|^{\frac{1}{r}-\frac{1}{p}}\|F_2^{A_2}\|_{L^p}$.
\end{proof}

Using the above lemma, we can now prove bilinear restriction estimates in $\^{L}^r$, analogous to those established by Selberg in \cite[Theorem 2.1]{selberg_bil_2d}. In the sequel we make use of the notation $N_{\min}^{012}=\min\{N_0, N_1, N_2\}$, and similarly for $N_{\min}^{12}$, $L_{\min}^{12}$ and $L_{\max}^{12}$.

\begin{prop}\label{rest}
 Let $1<r\leq 2$, then the estimate
\begin{equation}\label{bi_rest}
 \left \|\P_{K_{N_0}}\left (\P_{K_{N_1, L_1}^{\pm_1}}u_1\cdot \P_{K_{N_2, L_2}^{\pm_2}}u_2\right )\right \|_{\^{L}^r}\leq C\|u_1\|_{\^{L}^r}\|u_2\|_{\^{L}^r}
\end{equation}
holds with
\begin{align}
 C & \sim (N_{\min}^{012})^{\frac{2}{p}}(N_{\min}^{12})^{\frac{2}{r}-\frac{2}{p}}(L_{\min}^{12})^{\frac{1}{r}}, \label{bi_easy}\\
 C & \sim (N_{\min}^{012})^{\frac{1}{p}}(N_{\min}^{12})^{\frac{3}{2r}-\frac{1}{p}}(L_{\min}^{12})^{\frac{1}{r}}(L_{\max}^{12})^{\frac{1}{2r}} \label{bi_hard},
\end{align}
regardless of the choice of signs $\pm_j$.
\end{prop}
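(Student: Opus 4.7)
My plan is to apply Lemma \ref{rest_triv} with $A_j = K^{\pm_j}_{N_j, L_j}$ for $j=1,2$ and $A_0$ carrying only the frequency restriction $|\xi_0|\lesssim N_0$, and then to bound the intersection measures appearing in that lemma. The two claimed bounds \eqref{bi_easy} and \eqref{bi_hard} correspond to two different levels of geometric sharpness: the first is crude and purely measure-theoretic, while the second exploits the finer angular geometry of thickened cones.

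For \eqref{bi_easy}, elementary volume estimates on the cones suffice. For $X\in A_0$, the $\xi_1$-component of $A_1\cap(X-A_2)$ is confined to the intersection of two discs/annuli of radii $\lesssim N_1$ and $\lesssim N_2$, giving area $\lesssim (N_{\min}^{12})^2$, while the $\tau_1$-component is pinned by either modulation constraint to an interval of length $\lesssim L_{\min}^{12}$, whence $|A_1\cap(X-A_2)|\lesssim(N_{\min}^{12})^2L_{\min}^{12}$. A similar calculation yields $|A_0\cap(X_1+A_2)|\lesssim\min(N_0,N_2)^2 L_2$ for $X_1\in A_1$, and symmetrically with indices $1,2$ swapped. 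In the HLH/HHL cases (i.e.\ when $N_{\min}^{012}\in\{N_1,N_2\}$) the first bound of Lemma \ref{rest_triv} already matches \eqref{bi_easy}, since then $N_{\min}^{12}=N_{\min}^{012}$ and the exponent of $N_{\min}^{12}$ collapses to $2/r$. In the LHH case ($N_{\min}^{012}=N_0$, forcing $N_1\sim N_2\sim N_{\min}^{12}$) I use whichever of the second or third bounds contains $L_{\min}^{12}$; a short calculation confirms that the result again reproduces \eqref{bi_easy}.

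The sharper bound \eqref{bi_hard} is the main obstacle. I would prove it via an angular Whitney decomposition: for a dyadic parameter $\gamma>0$ and a maximally $\gamma$-separated set $\Omega(\gamma)\subset\S^1$, decompose each input as
\[
\P_{\dot{K}^{\pm_j}_{N_j,L_j}}u_j=\sum_{\omega_j\in\Omega(\gamma)}\P_{\dot{K}^{\pm_j}_{N_j,L_j,\gamma}(\omega_j)}u_j,\qquad j=1,2.
\]
For each cap pair whose angular separation $\alpha$ satisfies $\alpha\gtrsim\gamma$, the $\xi_1$-component of the cap-intersection is now confined to a parallelogram of dimensions roughly $N_{\min}^{12}\times N_{\min}^{12}\gamma$ rather than $N_{\min}^{12}\times N_{\min}^{12}$, which gains a factor of $\gamma$ over the crude estimate. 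Coupling this with the null-cone geometry, which ties $\alpha$, $\gamma$, and the quantity $L_{\max}^{12}/N_{\min}^{12}$ together, and then optimizing $\gamma$ produces exactly the gain of $(L_{\max}^{12})^{1/(2r)}$ over \eqref{bi_easy}.

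To reassemble the cap-level estimates, I rely on almost-orthogonality in $\^{L}^r$: distinct caps have essentially disjoint Fourier supports, so on the Fourier side the $L^{r'}$-norm of a sum splits as an $\ell^{r'}$-sum of individual $L^{r'}$-norms, which in turn is controlled by the full $\^{L}^r$-norm of each factor. Summing the cap-level bilinear bounds first over dyadic $\alpha\geq\gamma$ and then over the absolute positions of the caps yields \eqref{bi_hard}. The hardest step is the sharp volume bound on the cap-intersection: this forces one to track the radial and angular geometries of the two thickened cones together with the null-cone constraint, which is delicate in 2D because the sphere $\{|\xi|=N\}$ is only a curve. Once that geometric lemma is in hand, the remaining manipulations are routine, modeled on Selberg's bilinear cone machinery but adapted to the $\^{L}^r$ setting in place of $L^2$.
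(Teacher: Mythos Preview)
Your treatment of \eqref{bi_easy} matches the paper's: split into HLH/HHL versus LHH, use the first bound of Lemma~\ref{rest_triv} in the former and the second/third bound (together with $|A_j|\lesssim N_j^2 L_j$) in the latter. That part is fine.

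For \eqref{bi_hard} your outline has the right ingredients --- angular Whitney decomposition and cap-level volume bounds --- but there is a genuine gap in the LHH case ($N_0\ll N_1\sim N_2$). You propose to decompose $u_1$ and $u_2$ into caps and then reassemble via almost-orthogonality: disjoint Fourier supports give you $\ell^{r'}$ control on the cap pieces of \emph{each} input. But to sum $\sum_{\omega}\|u_1^{\omega_1}\|_{\^{L}^r}\|u_2^{\omega_2(\omega_1)}\|_{\^{L}^r}$ via H\"older you need one factor in $\ell^{r'}$ and the other in $\ell^r$; with $r<2$ you cannot pass from $\ell^{r'}$ to $\ell^r$. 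Nor can you recover orthogonality on the output side: since $\xi_0=-\xi_1-\xi_2$ is small, many angularly separated pairs $(\omega_1,\omega_2)$ land in the same region of the $N_0$-ball. The paper circumvents this by first \emph{dualizing} in the LHH case: it rewrites \eqref{bi_rest} as the equivalent estimate
\[
\left\|\P_{K_{N_2,L_2}^{\pm_2}}\bigl(\P_{K_{N_1,L_1}^{\pm_1}}u_1\cdot\P_{K_{N_0}^{\pm_0}}u_0\bigr)\right\|_{\^{L}^{r'}}\lesssim C\,\|u_1\|_{\^{L}^r}\|u_0\|_{\^{L}^{r'}},
\]
and then decomposes $u_1$ and $u_0$ into caps (angle $\theta_{01}$, not $\theta_{12}$). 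Now the cap pieces of $u_1$ sit in $\ell^{r'}$ and those of $u_0$ in $\ell^{r}$, so H\"older closes. This dualization step is not cosmetic; without it your reassembly fails for $r<2$.

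A second omission: you do not say where the hypothesis $r>\tfrac{3}{2}$ enters. In the paper it appears only in the LHH large-angle sum $\Sigma_2$: after the cap-level bound one must sum a geometric series $\sum_{\gamma>\gamma_0}\gamma^{\frac{1}{r}-\frac{2}{r'}}$ with $\gamma_0=(L_{\max}^{12}/N_1)^{1/2}$, and convergence at the bottom requires $\tfrac{1}{r}-\tfrac{2}{r'}=\tfrac{3}{r}-2<0$. Your phrase ``optimizing $\gamma$'' hides exactly this step. (By contrast, in the HLH case the paper simply invokes Selberg's volume bound $|E|\lesssim N_1^{3/2}L_{\min}^{12}(L_{\max}^{12})^{1/2}$ and the first option in Lemma~\ref{rest_triv}; no angular decomposition or restriction on $r$ is needed there.)
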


\begin{remark}
 The proof of estimate \eqref{key} uses only the following weaker version of the above estimate with \eqref{bi_hard},
\begin{equation}\label{bi_rest_nec}
  \left \|\P_{\dot{K}_{N_1, L_1}^{\pm_1}}u_1\cdot \P_{\dot{K}_{N_2, L_2}^{\pm_2}}u_2\right \|_{\^{L}^r}\leq (N_{\min}^{12})^{\frac{3}{2r}}(L_{\min}^{12})^{\frac{1}{r}}(L_{\max}^{12})^{\frac{1}{2r}}\|u_1\|_{\^{L}^r}\|u_2\|_{\^{L}^r}.
\end{equation}
However, the proof of Proposition \ref{rest} is not much harder than what is required to prove the last estimate, hence our choice of stating and proving the estimates in the more general form for possible future uses.
\end{remark}

\begin{remark}\label{bi_easy_gen}
 The bound with \eqref{bi_easy} can be trivially generalized to any dimension, since Lemma \ref{rest_triv}, which is the only ingredient in its proof, holds irrespective of the dimension of the space. In $n$ dimensions \eqref{bi_easy} looks like
\begin{equation*}\label{bi_easy_n}
  C \sim (N_{\min}^{012})^{\frac{n}{p}}(N_{\min}^{12})^{\frac{n}{r}-\frac{n}{p}}(L_{\min}^{12})^{\frac{1}{r}}.\\
\end{equation*}
\end{remark}

\begin{proof}[Proof of \eqref{bi_easy}]
 The bound with \eqref{bi_easy} immediately follows from Lemma \ref{rest_triv}. To see this, we split into the cases $N_0\lesssim N_1\sim N_2$ (LHH), $N_1\lesssim N_0\sim N_2$ (HLH), and $N_2\lesssim N_0\sim N_1$ (HHL), and by symmetry consider only LHH and HLH cases.

In the HLH case, $N_1\lesssim N_2\sim N_0$, we estimate the volume of the set
\begin{equation*}
 E=K_{N_1, L_1}^{\pm_1}\cap(X_0-K_{N_2, L_2}^{\pm_2})\subset\{\xi_1\lesssim N_1; \tau_1=\pm_1|\xi_1|+O(L_1), \tau_0-\tau_1=\pm_2|\xi_0-\xi_1|+O(L_2)\},
\end{equation*}
for any $X_0\in K_{N_0}$. Integrating first in $\tau_1$ and then in $\xi_1$, we obtain $|E|\lesssim N_1^2L_{\min}^{12}$. Raising to the power $1/r$ and using the first bound from Lemma \ref{rest_triv} gives the desired result.

For the LHH regime, i.e. when $N_0\ll N_1\sim N_2$, due to symmetry we can assume $L_1\leq L_2$, and estimate the volume of the set
\begin{equation*}
 E=K_{N_0}\cap(X_2+K_{N_1, L_1}^{\pm_1})\subset \{\xi_0\lesssim N_0; \tau_0-\tau_2=\pm_1 |\xi_0-\xi_2|+O(L_1)|\},
\end{equation*}
for any $X_2\in K_{N_2, L_2}^{\pm_2}$. Again, integrating first in $\tau_0$ and then in $\xi_0$, we have $|E|\leq N_0^2 L_1$, which coupled with $|K_{N_1, L_1}^{\pm_1}|\leq N_1^2L_1$ and the third bound in Lemma \ref{rest_triv} gives the desired result.
\end{proof}

\begin{proof}[Proof of \eqref{bi_hard}]
 We may assume $L_{\max}^{12}\ll N_{\min}^{12}$, since otherwise \eqref{bi_easy} is already better than \eqref{bi_hard}. Additionally, we may replace $K_{N_j, L_j}^{\pm_j}$ by $\dot{K}_{N_j, L_j}^{\pm_j}$ due to the presence of the factor $(N_{\min}^{012})^{\frac{1}{p}}(N_{\min}^{12})^{\frac{3}{2r}-\frac{1}{p}}$ in the bound. Indeed, using the dual formulation and decomposing the balls $|\xi_j|\lesssim N_j$ into annuli $|\xi_j|\sim M_j$ for dyadic $0<M_j\leq N_j$, we can sum over $M_{\max}^{012}\leq N_{\min}^{012}$ using the factor $(M_{\min}^{012})^{\frac{1}{p}}$ and the fact that the two largest of the $M_j$'s are comparable. For the rest of the sum we use the duality of $l^p$ and $l^{r}$ in the HLH and HHL cases, or the factor $(M_{\min}^{12})^{\frac{3}{2r}-\frac{1}{p}}$ in the LHH case. We now proceed with the proof separately in the HLH and LHH cases. In what follows, we use the calculations and ideas of \cite[Section 3]{selberg_bil_2d} in a substantial way.

\noindent$\bullet$\quad\textbf{The HLH case: } $N_1\lesssim N_0\sim N_2$. By Lemma \ref{rest_triv} we reduce to estimating the volume of the set
\begin{equation*}
 E=\dot{K}_{N_1, L_1}^{\pm_1}\cap(X_0-\dot{K}_{N_2, L_2}^{\pm_2})
\end{equation*}
uniformly in $X_0\in \dot{K}_{N_0}$. Selberg estimated the volume of this set in the HLH regime in \cite[Subsection 3.1]{selberg_bil_2d} as $|E|\lesssim N_1^{\frac{3}{2}}L_{\min}^{12}(L_{\max}^{12})^{\frac{1}{2}}$, using the fact that $L_{\max}^{12}\ll N_1\lesssim N_2$ and relying on an estimate for the area of intersection of two thickened circles \cite[Lemma 2.1]{selberg_bil_2d}. Using this bound, we then have
\begin{align*}
 |E|^{\frac{1}{r}}\lesssim \left (N_1^{\frac{3}{2}} L_{\min}^{12}(L_{\max}^{12})^{\frac{1}{2}}\right )^{\frac{1}{r}} & \lesssim N_1^{\frac{3}{2r}}(L_{\min}^{12})^{\frac{1}{r}}(L_{\max}^{12})^{\frac{1}{2r}},
\end{align*}
which establishes \eqref{bi_hard} in the HLH regime.

\noindent$\bullet$\quad\textbf{The LHH case: } $N_0\ll N_1\sim N_2$. We may assume $L_1\leq L_2$ by symmetry, and moreover,
\begin{equation}\label{gamma0<1}
L_1\leq L_2\ll (N_0/N_1)^{\frac{2r}{p}}N_1,
\end{equation}
since otherwise, \eqref{bi_easy} is better than \eqref{bi_hard}. We need to prove that \eqref{bi_rest} holds with
\begin{equation}\label{C_hard_LHH}
 C\sim N_0^{\frac{1}{p}}N_1^{\frac{3}{2r}-\frac{1}{p}}L_1^{\frac{1}{r}}L_2^{\frac{1}{2r}}.
\end{equation}

Recall that \eqref{bi_rest} is equivalent to the dual estimate \eqref{dual_form}. By a simple change of variables in the integral in the trilinear form on the left, we can also see that \eqref{dual_form} is equivalent to the product estimate
\begin{equation}\label{u_1u_0}
 \left \|\P_{K_{N_2, L_2}^{\pm_2}}\left (\P_{K_{N_1, L_1}^{\pm_1}}u_1\cdot \P_{K_{N_0}}u_0\right )\right \|_{\^{L}^p}\leq C\|u_1\|_{\^{L}^r}\|u_0\|_{\^{L}^p},
\end{equation}
where $u_0$ is the inverse Fourier transform of $F_0$. So it suffices to prove \eqref{u_1u_0} for $C$ given by \eqref{C_hard_LHH}.

For the proof of the last estimate, we need to further decompose the Fourier supports of $u_1$ and $u_0$ into angular sectors $\G_\gamma(\omega)=\{\xi\in \R^2 : \theta(\xi, \omega)\leq \gamma\}$. We list some of the well-known facts about such decompositions (c.f. \cite[Section 2.5]{selberg_bil_2d}).

Let $\Omega(\gamma)\subset \S^1$ be a maximal $\gamma$-separated subset of the unit circle $\S^1\subset \R^2$, where $0<\gamma<\pi$. Then
\begin{equation}\label{ang_part}
 1\leq \sum_{\omega\in\Omega(\gamma)} \chi_{\G_\gamma(\omega)}(\xi)\leq 5 \qquad (\text{for all } \xi\neq 0), 
\end{equation}
where the inequality on the left follows from maximality of $\Omega(\gamma)$, and the right inequality from the $\gamma$-separation, which results in the almost orthogonality condition
\begin{equation}\label{ang_ortho}
\#\{\omega'\in\Omega(\gamma) : \theta(\omega', \omega)\leq k\gamma\}\leq 2k+1,
\end{equation}
for any $k\in \N$ and $\omega\in\Omega(\gamma)$. We will employ the following notation for localizations to the angular sectors
\[
 u_1^{\gamma, \omega}=\P_{\theta(\pm_1\xi, \omega)\leq \gamma} u_1, \qquad u_0^{\gamma, \omega}=\P_{\theta(\xi, \omega)\leq \gamma} u_0.
\]
By \eqref{ang_part}
\[
 \|\~{u_1}\|_{L^p}^{p}\sim \sum_{\omega\in\Omega(\gamma)} \|\~{u_1^{\gamma, \omega}}\|_{L^p}^p, \qquad \|\~{u_0}\|_{L^r}^{r}\sim \sum_{\omega\in\Omega(\gamma)} \|\~{u_0^{\gamma, \omega}}\|_{L^r}^r.
\]
Hence, also
\begin{equation}\label{gammaSum}
 \sum_{\substack{\omega_1, \omega_0\in \Omega(\gamma) \\ \theta(\omega_1, \omega_0)\lesssim \gamma}} \|u_1^{\gamma, \omega_1}\|_{\^{L}^r} \|u_0^{\gamma, \omega_0}\|_{\^{L}^p}\lesssim \|u_1\|_{\^{L}^r}\|u_0\|_{\^{L}^p},
\end{equation}
where we used H\"{o}lder's inequality and \eqref{ang_ortho}.

We also observe the following Whitney decomposition with respect to angular variables, the proof of which is elementary and is omitted.

\begin{lemma}\label{whitney1}
 We have
\[
 1\sim \sum_{\substack{0<\gamma<1\\ \gamma - \text{dyadic}}} \; \sum_{\substack{\omega_1, \omega_0\in \Omega(\gamma)\\ 3\gamma \leq \theta(\omega_1, \omega_0)\leq 12\gamma}} \chi_{\G_\gamma(\omega_1)}(\xi_1) \chi_{\G_\gamma(\omega_0)}(\xi_0),
\]
for all $\xi_1, \xi_0\in \R^2\setminus \{0\}$, with $\theta(\xi_1, \xi_0)>0$.
\end{lemma}

Note that in the above decomposition $\G_\gamma(\omega_1)$ and $\G_\gamma(\omega_0)$ are always separated by an angle of at least $\gamma$, as implied by the condition $\theta(\omega_1, \omega_0)\geq 3\gamma$. If this separation is not needed, then one has the following.

\begin{lemma}\label{whitney2}
 For any $0<\gamma<1$, $k\in \N$,
\[
 \chi_{\theta(\xi_1, \xi_0)\leq k\gamma}\lesssim \sum_{\substack{\omega_1, \omega_0\in \Omega(\gamma)\\ \theta(\omega_1, \omega_0)\leq (k+2)\gamma}} \chi_{\G_\gamma(\omega_1)}(\xi_1) \chi_{\G_\gamma(\omega_0)}(\xi_0),
\]
for all $\xi_1, \xi_0\in \R^2\setminus \{0\}$.
\end{lemma}
We again skip the simple proof.

Returning to the proof of estimate \eqref{u_1u_0}, we denote $\theta_{01}=\theta(\xi_0, \pm_1\xi_1)$, and split into the cases $\theta_{01}\lesssim \gamma_0$, and $\theta_{01}\gg \gamma_0$, where
\begin{equation*}
 \gamma_0=\left (\frac{L_2}{N_1}\right )^{\frac{1}{2}}\left (\frac{N_1}{N_0}\right )^{\frac{r}{p}}.
\end{equation*}
This choice for $\gamma_0$ will be apparent from our bounds in the two cases.  Observe that $\gamma_0\ll 1$ due to \eqref{gamma0<1}.

Assuming without loss of generality that $\~{u}_1, \~{u}_0\geq 0$, we have by Lemmas \ref{whitney1} and \ref{whitney2},
\begin{align*}
\|\P_{N_2}(u_1u_0)\|_{\^{L}^p} & \lesssim \sum_{\omega_0, \omega_1\in\Omega(\gamma_0)} \chi_{\theta(\omega_0, \omega_1)\lesssim \gamma_0}\|\FT\P_{N_2}(u_1^{\gamma_0, \omega_1} u_0^{\gamma_0, \omega_0})\|_{L^r}\\
&\qquad +\sum_{\gamma_0<\gamma<1}\sum_{\omega_0, \omega_1\in\Omega(\gamma)} \chi_{3\gamma\leq \theta(\omega_0, \omega_1)\leq 12\gamma}\|\FT\P_{N_2}(u_1^{\gamma, \omega_1} u_0^{\gamma, \omega_0})\|_{L^r}=\Sigma_1+\Sigma_2,
\end{align*}
where $\FT$ denotes the time-space Fourier transform.

For $\Sigma_1$, we estimate the volume of the set $E=\dot{K}_{N_0, \gamma_0}(\omega_0)\cap(X_2+\dot{K}^{\pm_1}_{N_1, L_1, \gamma_0}(\omega_1))$, where $X_2$ is any point in $\dot{K}^{\pm_2}_{N_2, L_2}$. Notice that
\begin{equation*}
 E\subset \{(\tau_0, \xi_0) : |\xi_0|\lesssim N_0, \theta(\xi_0, \omega_0)<\gamma_0, \tau_0=\pm_1|\xi_0-\xi_2|+\tau_2+O(L_1)\}.
\end{equation*}
Integrating first in $\tau_0$, and then $\xi_0$, we have $|E|\lesssim N_0^2\gamma_0 L_1$. Further observing that $|A_1|\sim N_1^2\gamma_0 L_1$, we have by Lemma \ref{rest_triv} that
\begin{equation*}
 \|\FT\P_{N_2}(u_1^{\gamma_0, \omega_1} u_0^{\gamma_0, \omega_0})\|_{L^r}\lesssim (N_0^2\gamma_0 L_1)^{\frac{1}{p}}(N_1^2\gamma_0 L_1)^{\frac{1}{r}-\frac{1}{p}}\|\~{u_1^{\gamma_0, \omega_1}}\|_{L^p}\|\~{u_0^{\gamma_0, \omega_0}}\|_{L^r},
\end{equation*}
and hence,
\begin{align*}
 \Sigma_1 
 &\lesssim N_0^{\frac{2}{p}}N_1^{\frac{2}{r}-\frac{2}{p}}\gamma_0^{\frac{1}{r}} L_1^{\frac{1}{r}}\sum_{\omega_1, \omega_0\in\Omega(\gamma_0)}\chi_{\theta(\omega_1, \omega_0)\lesssim\gamma_0}\|\~{u_1^{\gamma_0, \omega_1}}\|_{L^p}\|\~{u_0^{\gamma_0, \omega_0}}\|_{L^r}\\
 &\qquad\qquad \lesssim N_0^{\frac{2}{p}}N_1^{\frac{2}{r}-\frac{2}{p}}\gamma_0^{\frac{1}{r}} L_1^{\frac{1}{r}}\|\~{u_1}\|_{L^p}\|\~{u_0}\|_{L^r},
\end{align*}
where in the last step we used \eqref{gammaSum}. Finally, using $\gamma_0=(L_2/N_1)^{\frac{1}{2}}(N_1/N_0)^{\frac{r}{p}}$ gives the desired estimate.

For $\Sigma_2$, we need to further decompose the Fourier supports of $u_1$ and $u_2$ into angular sectors. For this we use $\overline{\gamma}$ for the size of the sectors and $\overline{\omega}_1$, $\overline{\omega}_2$ for the centers of the sectors respectively, to distinguish them from those of the previous angular decompositions. Using the same Whitney-type decomposition as before, we have
\begin{align*}
\Sigma_2 & =\sum_{\gamma_0<\gamma<1}\sum_{\omega_0, \omega_1\in\Omega(\gamma)} \chi_{3\gamma\leq \theta(\omega_0, \omega_1)\leq 12\gamma} \sum_{\overline{\omega}_1, \overline{\omega}_2\in\Omega(\gamma_0)} \chi_{\theta(\overline{\omega}_1, \overline{\omega}_2)\lesssim \gamma_0}\|\FT\P_{N_2, L_2}^{\gamma_0, \overline{\omega}_2}(u_1^{\gamma, \omega_1, \gamma_0, \overline{\omega}_1} u_0^{\gamma, \omega_0})\|_{L^r}\\
&\quad +\sum_{\gamma_0<\gamma<1}\sum_{\omega_0, \omega_1\in\Omega(\gamma)} \chi_{3\gamma\leq \theta(\omega_0, \omega_1)\leq 12\gamma} \\
&\qquad\qquad\qquad \sum_{\gamma_0<\overline{\gamma}<1}\sum_{\overline{\omega}_1, \overline{\omega}_2\in\Omega(\overline{\gamma})} \chi_{3\overline{\gamma}\leq \theta(\overline{\omega}_1, \overline{\omega}_2)\leq 12\overline{\gamma}} \|\FT\P_{N_2, L_2}^{\overline{\gamma}, \overline{\omega}_2}(u_1^{\gamma, \omega_1, \overline{\gamma}, \overline{\omega}_1} u_0^{\gamma, \omega_0})\|_{L^r}.
\end{align*}
But since $\xi_0=\xi_1+\xi_2$, and, hence, $\theta(\xi_1, \xi_0)\leq \theta(\xi_1, \xi_2)$, the first term in the above sum is empty, and the $\overline{\gamma}$ in the second sum is restricted to $\gamma_0<\gamma\lesssim \overline{\gamma}<1$. Furthermore, since
\[
\#\{\overline{\omega}_1\in\Omega(\overline{\gamma}): \Gamma_\gamma(\omega_1)\cap \Gamma_{\overline{\gamma}}(\overline{\omega})\ne \emptyset\}=\mathcal{O}(1),
\]
the last sum in the second term in $\Sigma_2$ is trivial. We thus have
\begin{align*}
\Sigma_2 \lesssim \sum_{\gamma_0<\gamma<1} \sum_{\gamma<\overline{\gamma}<1} \sum_{\omega_0, \omega_1\in\Omega(\gamma)} \chi_{3\gamma\leq \theta(\omega_0, \omega_1)\leq 12\gamma}  \chi_{3\overline{\gamma}\leq \theta(\overline{\omega}_1, \overline{\omega}_2)\leq 12\overline{\gamma}} \|\FT\P_{N_2, L_2}^{\overline{\gamma}, \overline{\omega}_2}(u_1^{\gamma, \omega_1, \overline{\gamma}, \overline{\omega}_1} u_0^{\gamma, \omega_0})\|_{L^r}.
\end{align*}

To estimate the summand in the above sum, we use Lemma \ref{rest_triv} and an estimate for the volume of the set $E=\dot{K}^{\pm_1}_{N_1, L_1, \overline{\gamma}}(\overline{\omega}_1)\cap(X_0-\dot{K}^{\pm_2}_{N_2, L_2, \overline{\gamma}}(\overline{\omega}_2))$ established by Selberg in \cite[Section 3.3]{selberg_bil_2d}.\footnote{Selberg relies on self-duality of $L^2$ to further break the supports of $\widehat{u}_1$ and $\widehat{u}_2$ into balls of radius $N_0$ via a standard tiling argument, to obtain the estimate $|E_t|<N_0L_1L_2/\gamma$ for the set $E_t$ with the tiled supports. In our case the absence of self-duality would lead to losses in the tiling argument, however the estimate $|E|<N_1L_1L_2/\overline{\gamma}$ follows from Selberg's argument in \cite[Section 3.3]{selberg_bil_2d} verbatim without the tiling.} Relying on the separation of the angular sectors of $u_1$ and $u_2$, Selberg showed that the volume of this set can be estimated by $|E|<N_1L_1L_2/\overline{\gamma}$.

Using this estimate for the volume of $E$, $|E|<N_1L_1L_2/\overline{\gamma}$, we have from Lemma \ref{rest_triv},
\begin{equation*}
 \|\FT\P_{N_2, L_2}^{\overline{\gamma}, \overline{\omega}_2}(u_1^{\gamma, \omega_1, \overline{\gamma}, \overline{\omega}_1} u_0^{\gamma, \omega_0})\|_{L^r}\lesssim \left (\frac{N_1 L_1 L_2}{\overline{\gamma}}\right )^{\frac{1}{r}}\|\~{u_1^{\gamma, \omega_1}}\|_{L^p}\|\~{u_0^{\gamma, \omega_0}}\|_{L^r}.
\end{equation*}
Hence,
\begin{align*}
 \Sigma_2
 & \lesssim N_1^{\frac{1}{r}}L_1^{\frac{1}{r}}L_2^{\frac{1}{r}} \sum_{\gamma_0<\gamma<1} \sum_{\gamma<\overline{\gamma}<1} \overline{\gamma}^{-\frac{1}{r}}\sum_{\omega_0, \omega_1\in\Omega(\gamma)} \chi_{3\gamma\leq \theta(\omega_0, \omega_1)\leq 12\gamma}  \|\~{u_1^{\gamma, \omega_1}}\|_{L^p}\|\~{u_0^{\gamma, \omega_0}}\|_{L^r}\\
 & \qquad\qquad\lesssim N_1^{\frac{1}{r}}L_1^{\frac{1}{r}} L_2^{\frac{1}{r}} \gamma_0^{-\frac{1}{r}}\|\~{u_1}\|_{L^p}\|\~{u_0}\|_{L^r},
\end{align*}
where we estimated the sum in $\omega_1, \omega_0$ as in the case of $\Sigma_1$, and successively summed in $\overline{\gamma}$ then in $\gamma$. Using $\gamma_0=(L_2/N_1)^{\frac{1}{2}}(N_1/N_0)^{\frac{r}{p}}$ once again gives the desired estimate.
\end{proof}

%%%%%%%%%%%%%%%%%%%%%%%%%%%%%%%%%%%%%%%%%%%%%%%%%%%%%%%%%%%%%%%%%%%%%%%%%%%%%%%%%%%%%%%%%%%%%%%%%%%%%%%%%%%%%%%%%%
%%%%%%%%%%%%%%%%%%%%%%%%%%%%%%%%%%%%%%%%%%%%%%%%%%%%%%%%%%%%%%%%%%%%%%%%%%%%%%%%%%%%%%%%%%%%%%%%%%%%%%%%%%%%%%%%%%
%%%%%%%%%%%%%%%%%%%%%%%%%%%%%%%%%%%%%%%%%%%%%%%%%%%%%%%%%%%%%%%%%%%%%%%%%%%%%%%%%%%%%%%%%%%%%%%%%%%%%%%%%%%%%%%%%%

\section{The proof of estimate \eqref{key}}\label{sec4}
We start by establishing some simple summation lemmas.
\begin{lemma}\label{summation} Let $A, B\in \R$, then
\begin{equation*}
 \sum_{N_0\lesssim N_1} N_0^A\lesssim N_1^B,
\end{equation*}
provided $(i)\; B \geq A$, $(ii)\; B\geq 0$ and $(iii)\; A=B=0$ is excluded.
\end{lemma}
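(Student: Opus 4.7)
The plan is a routine case analysis on the sign of the exponent $A$. Writing each admissible $N_0 = 2^j$ with $j \in \{0, 1, 2, \dots\}$, the constraint $N_0 \lesssim N_1$ becomes $j \leq \log_2(C N_1)$ for some absolute constant $C$, so the dyadic sum $\sum N_0^A$ behaves in three qualitatively different ways depending on $A$.

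\textbf{Case $A < 0$.} I would simply extend the sum to all dyadic $N_0 \geq 1$; it is then a convergent geometric series $\sum_{j \geq 0} 2^{jA} = (1-2^A)^{-1}$, bounded by an absolute constant. Hypothesis $(ii)$ gives $B \geq 0$, and since $N_1 \geq 1$ we have $N_1^B \geq 1$, which absorbs this constant.

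\textbf{Case $A = 0$.} Here the sum reduces to counting admissible dyadic scales, yielding $O(\log N_1 + 1)$. Hypotheses $(ii)$ and $(iii)$ together force $B > 0$ strictly, and for dyadic $N_1 \geq 1$ we have the elementary bound $\log N_1 + 1 \lesssim N_1^B$.

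\textbf{Case $A > 0$.} The dyadic sum $\sum_{j \leq \log_2(C N_1)} 2^{jA}$ is dominated by its largest term, of size comparable to $N_1^A$ up to a constant depending only on $A$ and $C$. Hypothesis $(i)$ then gives $N_1^A \leq N_1^B$ since $N_1 \geq 1$.

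There is essentially no obstacle here: the three conditions on the pair $(A, B)$ are tailored exactly to cover the convergent, logarithmic, and geometric-growth regimes. Combining the three cases gives the claimed estimate.
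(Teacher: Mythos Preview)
Your proof is correct and follows essentially the same approach as the paper: the paper invokes the trivial dyadic summation rule
\[
 \sum_{N_0\lesssim N_1} N_0^A \sim
 \begin{cases}
  N_1^A & A>0,\\
  \log\langle N_1\rangle & A=0,\\
  1 & A<0,
 \end{cases}
\]
and leaves the matching against $N_1^B$ via conditions $(i)$--$(iii)$ implicit, whereas you have simply written out that matching explicitly in each case.
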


\begin{proof}
The statement of the lemma follows immediately from the following trivial dyadic summation rule
\begin{equation}\label{Sigma_A}
 \sum_{N_0} \chi_{N_0\leq N_1} N_0^A\sim \left \{
\begin{array}{ll}
 N_1^A & \text{if } A>0,\\
 \log\<N_1\> & \text{if } A=0,\\
 1 & \text{if } A<0.
\end{array}\right . 
\end{equation}
\end{proof}

The next two dyadic summation lemmas will be used for estimating HLH (and by symmetry, HHL) and LHH interactions respectively.

\begin{lemma}\label{HLH_dyadic}
 Let $A, B\in\R$. The estimate
\begin{equation*}
 \sum_{\bf N} \chi_{N_1\leq N_0\sim N_2} \frac{N_1^A}{N_0^B} \|F_0^{N_0}\|_{L^{r}}\|F_1^{N_1}\|_{L^p}\|F_2^{N_2}\|_{L^p}\lesssim \|F_0\|_{L^{r}}\|F_1\|_{L^p}\|F_2\|_{L^p}
\end{equation*}
holds, provided that (i) $B\geq A$, (ii) $B\geq 0$ and (iii) $A=B=0$ is excluded.
\end{lemma}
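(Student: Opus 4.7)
The plan is to perform the triple dyadic summation in two stages: first sum out $N_1$ against the factor $N_1^A$ using the trivial $\ell^\infty$ bound on $F_1$, then handle the remaining sum over $N_0 \sim N_2$ by H\"older's inequality with conjugate exponents $r$ and $p$ (recall $p = r'$).

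For the first stage, I would simply bound $\|F_1^{N_1}\|_{L^p} \leq \|F_1\|_{L^p}$, which pulls $\|F_1\|_{L^p}$ outside the sum and reduces the inner sum to $\sum_{N_1 \leq N_0} N_1^A$. The basic dyadic summation rule \eqref{Sigma_A} (which is the content of Lemma \ref{summation}) yields $N_0^A$ when $A>0$, $\log\<N_0\>$ when $A=0$, and $O(1)$ when $A<0$. In each case, hypotheses (i)--(iii) guarantee that this sum is bounded by a power of $N_0$ that is offset by the $N_0^{-B}$ factor: when $A>0$ we invoke $B \geq A$; when $A<0$ we invoke $B \geq 0$; when $A=0$ we use the exclusion $A=B=0$ to ensure $B>0$, so that the logarithm is dominated by an arbitrarily small power of $N_0$.

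After the first stage, we are left with $\|F_1\|_{L^p}$ times
\[
\sum_{N_0 \sim N_2} c(N_0)\, \|F_0^{N_0}\|_{L^r}\|F_2^{N_2}\|_{L^p},
\]
where $c(N_0)$ is a uniformly bounded function of $N_0$ under (i)--(iii). Since there are only $O(1)$ dyadic values of $N_2$ with $N_2 \sim N_0$, the sum can be collapsed to a single dyadic index, and H\"older's inequality with exponents $(r,p)$ together with the dyadic summation identities $\sum_N \|F^N\|_{L^q}^q \sim \|F\|_{L^q}^q$ for $q\in\{r,p\}$ (stated in Section \ref{sec3}) recovers $\|F_0\|_{L^r}\|F_2\|_{L^p}$, completing the estimate.

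I do not expect a real obstacle here; the statement is essentially bookkeeping once Lemma \ref{summation} is in hand. The only delicate point is the log factor arising in the $A=0$ case, which is precisely what forces the exclusion $A=B=0$ in hypothesis (iii): one needs $B>0$ strictly to absorb $\log\<N_0\>$ into $N_0^{-B}$.
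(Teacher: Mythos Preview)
Your proposal is correct and follows essentially the same approach as the paper: apply Lemma~\ref{summation} to sum out $N_1$ (absorbing $\|F_1^{N_1}\|_{L^p}\leq\|F_1\|_{L^p}$), then use H\"older's inequality with the conjugate pair $(r,p)$ together with the dyadic $\ell^q$ identities to handle the remaining sum over $N_0\sim N_2$. The paper's proof is just a pointer to \cite[Lemma~2.1]{DFS_2d} with Cauchy--Schwarz replaced by the $(\ell^r,\ell^p)$ H\"older inequality, which is precisely what you carry out.
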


\begin{proof}
 The proof of this lemma is contained in the proof of \cite[Lemma 2.1]{DFS_2d}, where, after applying Lemma \ref{summation}, one simply needs to use the duality of $l^p$ and $l^{r}$ (H\"{o}lder's inequality for $l^p$), instead of the Cauchy-Schwartz inequality.
\end{proof}

\begin{lemma}\label{LHH_dyadic}
 Let $A, B\in\R$. The estimate
\begin{equation*}
 \sum_{\bf N} \chi_{N_0\leq N_1\sim N_2} \frac{N_0^A}{N_1^B} \|F_0^{N_0}\|_{L^{r}}\|F_1^{N_1}\|_{L^p}\|F_2^{N_2}\|_{L^p}\lesssim \|F_0\|_{L^{r}}\|F_1\|_{L^p}\|F_2\|_{L^p}
\end{equation*}
holds, provided that (i) $B>A$, (ii) $B>0$.
\end{lemma}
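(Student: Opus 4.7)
The plan is to follow the two-step dyadic strategy used for Lemma \ref{HLH_dyadic}, but to adapt the Hölder pairings to the fact that now the low-frequency factor $F_0$ lives in $L^r$ rather than $L^p$. This swap is what tightens the allowed range of $(A,B)$ from $B \ge A,\, B \ge 0$ (with $A=B=0$ excluded) to the two strict inequalities stated here, and it is the only conceptual novelty.

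First I would dispatch the inner sum over $N_0 \le N_1$. Since $\frac{1}{p} + \frac{1}{r} = 1$, Hölder in the dyadic index $N_0$ gives
\begin{equation*}
\sum_{N_0 \le N_1} N_0^A \|F_0^{N_0}\|_{L^r} \le \Bigl(\sum_{N_0 \le N_1} N_0^{Ap}\Bigr)^{1/p} \Bigl(\sum_{N_0} \|F_0^{N_0}\|_{L^r}^r\Bigr)^{1/r} \lesssim \Bigl(\sum_{N_0 \le N_1} N_0^{Ap}\Bigr)^{1/p} \|F_0\|_{L^r}.
\end{equation*}
Lemma \ref{summation} then bounds the residual weight by $N_1^{\max(A,0)}$, except at the borderline $A=0$ where a logarithm appears. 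To uniformize, I would split $N_0^A = N_0^{A-\epsilon}\cdot N_0^{\epsilon}$ for a small $\epsilon > 0$ and absorb the extra $N_0^{\epsilon}$ into the $L^r$-sum; since $N_0 \le N_1$ gives $N_0^{\epsilon r} \le N_1^{\epsilon r}$, this produces the clean bound
\begin{equation*}
\sum_{N_0 \le N_1} N_0^A \|F_0^{N_0}\|_{L^r} \lesssim N_1^{\max(A,0) + \epsilon}\, \|F_0\|_{L^r}.
\end{equation*}

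Next I would handle the outer sum. Choose $\epsilon > 0$ small enough that $\delta := B - \max(A,0) - \epsilon > 0$; this is exactly where the two strict inequalities $B > A$ and $B > 0$ enter, since together they give $B > \max(A,0)$. What remains is
\begin{equation*}
\|F_0\|_{L^r} \sum_{N_1} \frac{\|F_1^{N_1}\|_{L^p}}{N_1^\delta} \sum_{N_2 \sim N_1} \|F_2^{N_2}\|_{L^p}.
\end{equation*}
Because only $O(1)$ dyadic values of $N_2$ satisfy $N_2 \sim N_1$, the inner $N_2$-sum is controlled by $\sup_{N_2}\|F_2^{N_2}\|_{L^p} \le \|F_2\|_{L^p}$, and a final Hölder in $N_1$ with conjugate exponents $p, r$ finishes:
\begin{equation*}
\sum_{N_1} N_1^{-\delta} \|F_1^{N_1}\|_{L^p} \le \Bigl(\sum_{N_1} N_1^{-\delta r}\Bigr)^{1/r}\Bigl(\sum_{N_1} \|F_1^{N_1}\|_{L^p}^p\Bigr)^{1/p} \lesssim \|F_1\|_{L^p},
\end{equation*}
converging because $\delta > 0$.

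The only delicate point is keeping the $\epsilon$-loss (introduced solely to smooth over the $A=0$ borderline) strictly below the budget $B - \max(A,0)$; this is precisely the room carved out by the strict hypotheses, and no deeper estimate is required. The argument is thus a routine pairing of dyadic Hölder with Lemma \ref{summation}, mirroring the HLH template but interchanging which low-frequency factor carries the $L^r$ versus $L^p$ norm.
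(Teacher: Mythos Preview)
Your proof is correct and follows essentially the same two-step template as the paper: first sum over the low frequency $N_0$ (you via H\"older in $l^p$--$l^r$, the paper via the sup bound $\|F_0^{N_0}\|_{L^r}\le\|F_0\|_{L^r}$) to extract a leftover decay $N_1^{-\delta}$ from the strict inequalities $B>\max(A,0)$, then dispatch the $N_1\sim N_2$ sum by supping one factor and applying H\"older with exponents $(r,p)$ to the other. The only cosmetic difference is that the paper sups $F_1$ and H\"olders $F_2$ while you do the reverse; your $\epsilon$-splitting to smooth the $A=0$ logarithm is a slightly more elaborate but perfectly valid substitute for the paper's direct use of Lemma~\ref{summation} with exponent $B-\epsilon$.
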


\begin{proof}
 The proof is similar to the proof of the previous lemma, with the only difference in that one cannot sum in $N_1\sim N_2$ using H\"{o}lder's inequality, since $l^p$ is not self-dual. But due to the strict inequalities in the hypothesis, we have $A<B-\epsilon$ for some $\epsilon>0$, such that $B-\epsilon>0$. And hence, by Lemma \ref{summation},
\begin{align*}
 \sum_{\bf N} \chi_{N_0\leq N_1\sim N_2} \frac{N_0^A}{N_1^B} \|F_0^{N_0}\|_{L^{r}}\|F_1^{N_1}\|_{L^p}\|F_2^{N_2}\|_{L^p}
 & \lesssim \|F_0\|_{L^{r}}\sum_{N_1\sim N_2} N_1^{-\epsilon}\|F_1^{N_1}\|_{L^{p}}\|F_2^{N_2}\|_{L^p}\\
 & \lesssim \|F_0\|_{L^{r}} \|F_2\|_{L^{p}} \left (N_1^{-\epsilon}\right )_{l^{r}}\left (\|F_1^{N_1}\|_{L^p}\right )_{l^p}\\
 & \lesssim \|F_0\|_{L^{r}} \|F_1\|_{L^{p}}\|F_2\|_{L^{p}}
\end{align*}
\end{proof}

We are now ready to prove the key estimate \eqref{key}.

\begin{proof}[Proof of estimate \eqref{key}]
It is enough to prove the dual estimate \eqref{key_dual}. We will use dyadic decompositions, and by symmetry, can assume $L_1\leq L_2$. By \eqref{I_localized}, estimate \eqref{key_dual} then reduces to showing
\begin{equation*}\label{I_1}
 \sum_{\bf N} \frac{S_{\bf N}}{N_0^{-\sigma} N_1^\sigma N_2^\sigma}\lesssim \|F_0\|_{L^{r}}\|F_1\|_{L^p}\|F_2\|_{L^p},
\end{equation*}
where
\[
 S_N=\sum_{\bf L}\chi_{L_1\leq L_2}\frac{J(F_0^{N_0}, F_1^{N_1, L_1}, F_2^{N_2, L_2})}{L_1^b L_2^b},
\]
with $J$ again defined as in \eqref{J}. By symmetry, we only need to consider the HLH and LHH cases.

\noindent$\bullet$\quad\textbf{The HLH case: } $N_1\lesssim N_0\sim N_2$. From estimate \eqref{bi_rest_nec} and the dual formulation,
\begin{equation*}
 J(F_0^{N_0}, F_1^{N_1, L_1}, F_2^{N_2, L_2})\lesssim  (N_{\min}^{12})^{\frac{3}{2r}}(L_{\min}^{12})^{\frac{1}{r}}(L_{\max}^{12})^{\frac{1}{2r}} \|F_0^{N_0}\|_{L^{r}}\|F_1^{N_1, L_1}\|_{L^p}\|F_2^{N_2, L_2}\|_{L^p}.
\end{equation*}
Hence,
\begin{equation}\label{S_N}
\begin{split}
 S_{\bf N} & \lesssim N_1^{\frac{3}{2r}}\sum_{\bf L} L_1^{\frac{1}{r}-b}L_2^{\frac{1}{2r}-b}\|F_0^{N_0}\|_{L^{r}}\|F_1^{N_1}\|_{L^p}\|F_2^{N_2}\|_{L^p}\\
 & \lesssim N_1^{\frac{3}{2r}}\|F_0^{N_0}\|_{L^{r}}\|F_1^{N_1}\|_{L^p}\|F_2^{N_2}\|_{L^p},
\end{split}
\end{equation}
as $b>\frac{1}{r}$. But then by Lemma \ref{HLH_dyadic},
\begin{align*}
 \sum_{\bf N} \frac{S_{\bf N}}{N_0^{-\sigma} N_1^\sigma N_2^\sigma}
&\lesssim \sum_{\bf N} \chi_{N_1\leq N_0\sim N_2} \frac{N_1^{\frac{3}{2r}-\sigma}}{N_0^0} \|F_0^{N_0}\|_{L^{r}}\|F_1^{N_1}\|_{L^p}\|F_2^{N_2}\|_{L^p}\\
& \lesssim \|F_0\|_{L^{r}}\|F_1\|_{L^p}\|F_2\|_{L^p},
\end{align*}
as $\sigma>\frac{3}{2r}$.

\noindent$\bullet$\quad\textbf{The LHH case: } $N_0\ll N_1\sim N_2$. Using \eqref{S_N}, we have by Lemma \ref{LHH_dyadic}
\begin{align*}
 \sum_{\bf N} \frac{S_{\bf N}}{N_0^{-\sigma} N_1^\sigma N_2^\sigma}
&\lesssim \sum_{\bf N} \chi_{N_0\leq N_1\sim N_2} \frac{N_0^\sigma}{N_1^{2\sigma-\frac{3}{2r}}} \|F_0^{N_0}\|_{L^{r}}\|F_1^{N_1}\|_{L^p}\|F_2^{N_2}\|_{L^p}\\
& \lesssim \|F_0\|_{L^{r}}\|F_1\|_{L^p}\|F_2\|_{L^p},
\end{align*}
since $\sigma >\frac{3}{2r}$.

\end{proof}

%%%%%%%%%%%%%%%%%%%%%%%%%%%%%%%%%%%%%%%%%%%%%%%%%%%%%%%%%%%%%%%%%%%%%%%%%%%%%%%%%%%%%%%%%%%%%%%%%%%%%%%%%%%%%%%%%%
%%%%%%%%%%%%%%%%%%%%%%%%%%%%%%%%%%%%%%%%%%%%%%%%%%%%%%%%%%%%%%%%%%%%%%%%%%%%%%%%%%%%%%%%%%%%%%%%%%%%%%%%%%%%%%%%%%
%%%%%%%%%%%%%%%%%%%%%%%%%%%%%%%%%%%%%%%%%%%%%%%%%%%%%%%%%%%%%%%%%%%%%%%%%%%%%%%%%%%%%%%%%%%%%%%%%%%%%%%%%%%%%%%%%%

\appendix

%%%%%%%%%%%%%%%%%%%%%%%%%%%%%%%%%%%%%%%%%%%%%%%%%%%%%%%%%%%%%%%%%%%%%%%%%%%%%%%%%%%%%%%%%%%%%%%%%%%%%%%%%%%%%%%%%%
%%%%%%%%%%%%%%%%%%%%%%%%%%%%%%%%%%%%%%%%%%%%%%%%%%%%%%%%%%%%%%%%%%%%%%%%%%%%%%%%%%%%%%%%%%%%%%%%%%%%%%%%%%%%%%%%%%
%%%%%%%%%%%%%%%%%%%%%%%%%%%%%%%%%%%%%%%%%%%%%%%%%%%%%%%%%%%%%%%%%%%%%%%%%%%%%%%%%%%%%%%%%%%%%%%%%%%%%%%%%%%%%%%%%%

\section{Local well-posedness for Sobolev data in $H^{\frac{7}{4}+}$}\label{appA}
For simplicity, we will only sketch the proof for the equation
\begin{equation}\label{main_eq_D}
 \Box u=(Du)^2,
\end{equation}
where $D$ stands for the spatial gradient. To treat the general equation \eqref{main_eq}, which may contain time derivatives in the nonlinearity as well, one can apply the arguments below to the equation $\Box (\del_t u)=\del_t(\del u)^2$ with appropriate initial data, for which the estimates can be closed at one derivative less regularity. This, along with the presented argument, will allow one to close the estimates for $\del u$ in $H^{\frac{3}{4}+}$.
 
We start by stating the Strichartz estimates for the wave equation. For proofs of these estimates see \cite{ginibre-velo}, \cite{keel-tao}.

A pair of indices $(p, q)$ is called wave-admissible in dimension $n$, if it belongs to the set
\[
 \sA=\left \{(p, q) : 2\leq p\leq \infty, 2\leq q<\infty, \frac{2}{p}+\frac{n-1}{q}\leq \frac{n-1}{2}\right \}.
\]
We will use the notation $p'$ for the Lebesgue conjugate of the index $p$, that is, $\frac{1}{p}+\frac{1}{p'}=1$.

\begin{theorem}[Strichartz estimates] If $(q, r)$ and $(\~{q}, \~{r})$ are wave-admissible, then
\[
 \|D^s u\|_{L^q_t L^r_x}\lesssim \|(u_0, u_1)\|_{\dot{H}^\gamma\times \dot{H}^{\gamma-1}} + \|D^{\~{s}}\Box u\|_{L^{\~{q}'}_t L^{\~{r}'}_x},
\]
provided the following scaling condition holds,
\[
 \frac{1}{q}+\frac{n}{r}-s=\frac{n}{2}-\gamma=\frac{1}{\~{q}'}+\frac{n}{\~{r}'}-2-\~{s}.
\]
\end{theorem}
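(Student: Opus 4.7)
The plan is to follow the standard $TT^*$ strategy for proving Strichartz estimates. Duhamel's formula writes the solution as
\[
u(t) = \cos(t|D|) u_0 + \frac{\sin(t|D|)}{|D|} u_1 + \int_0^t \frac{\sin((t-\sigma)|D|)}{|D|} F(\sigma)\, d\sigma,
\]
with $F = \Box u$. Decomposing $\cos$ and $\sin$ into the half-wave propagators $U_\pm(t) = e^{\pm it|D|}$, it suffices to establish the homogeneous bound
\[
\|D^s U_\pm(t) f\|_{L^q_t L^r_x} \lesssim \|f\|_{\dot{H}^\gamma_x}
\]
for wave-admissible $(q,r)$ subject to the scaling condition $\frac{1}{q}+\frac{n}{r}-s = \frac{n}{2}-\gamma$, together with its dual, and then to deduce the retarded inhomogeneous version via the Christ--Kiselev lemma.

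The homogeneous bound rests on two ingredients. The first is the trivial $L^2$-isometry $\|U_\pm(t) f\|_{L^2_x} = \|f\|_{L^2_x}$. The second is the dispersive decay, which after Littlewood--Paley localization to a dyadic frequency shell $|\xi|\sim 2^k$ reads $\|P_k U_\pm(t)f\|_{L^\infty_x} \lesssim |t|^{-(n-1)/2}\, 2^{k(n+1)/2} \|P_k f\|_{L^1_x}$, and is obtained via stationary phase applied to the oscillatory integral kernel of $U_\pm$. Real interpolation yields $L^{r'}\!\to L^r$ bounds for $U_\pm(t)$ of size $|t|^{-(n-1)(\frac12-\frac1r)}$ (up to derivative losses that are reabsorbed later), and the Hardy--Littlewood--Sobolev inequality then dominates the bilinear $TT^*$ kernel, yielding the homogeneous Strichartz estimate in the non-endpoint range after dyadic summation. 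The fractional derivative factors $D^s$ and $D^{\widetilde{s}}$ are inserted directly into the Fourier multipliers, and the scaling conditions stated in the theorem are precisely those forcing both sides of the inequality to transform identically under the symmetry $u(t,x)\mapsto u(\lambda t,\lambda x)$.

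The main obstacle in producing a fully self-contained proof is the sharp endpoint pair $(q,r) = (2, \tfrac{2(n-1)}{n-3})$ (and its dual on the forcing side), where Christ--Kiselev breaks down and one needs the atomic-decomposition refinement developed by Keel and Tao. Since Appendix A relies only on non-endpoint wave-admissible pairs to close the $H^{7/4+}$ estimates in two space dimensions, this delicate analysis is ultimately unnecessary for our purposes, and we refer to \cite{keel-tao, ginibre-velo} for the complete argument in the general case.
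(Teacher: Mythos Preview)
The paper does not prove this theorem at all: immediately before stating it, the authors write ``For proofs of these estimates see \cite{keel-tao} or \cite{ginibre-velo}.'' The Strichartz estimates are quoted as a black box and only applied, in Appendix~\ref{appA}, at a specific non-endpoint pair in dimension $n=2$.

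Your sketch is a faithful outline of the standard argument in those very references: Duhamel plus half-wave decomposition, frequency-localized dispersive decay, $TT^*$ with Hardy--Littlewood--Sobolev, Littlewood--Paley summation, and Christ--Kiselev for the retarded term. That is precisely the Ginibre--Velo route for the non-endpoint range, with Keel--Tao supplying the endpoint. One small remark: your identification of the endpoint obstruction at $(q,r)=(2,\tfrac{2(n-1)}{n-3})$ is only meaningful for $n\geq 4$; in two space dimensions wave-admissibility forces $q\geq 4$, so the Christ--Kiselev condition $q>\widetilde{q}'$ is automatic and no endpoint refinement is ever needed---which is consistent with your closing observation that Appendix~\ref{appA} only uses non-endpoint pairs.
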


The pair $(p, q)=(4, \infty)$ barely fails to be wave-admissible in dimension $n=2$. However, to keep the notation simple, we will assume that it is wave-admissible, and proceed with the estimates using this pair. For the proper proof one can take for example $(p, q)=\left (\frac{12}{3-4\epsilon}, \frac{3}{2\epsilon}\right )$ for a small $\epsilon>0$, which will give $s=\frac{1}{4}+\epsilon$, and the estimates will close in $H^{\frac{7}{4}+\epsilon}$ instead of $H^{\frac{7}{4}}$.

For the following choice of indeces in dimension $n=2$,
\[
 p=4, q=\infty, s=\frac{1}{4}; \qquad \gamma =1; \qquad \~{p}'=1, \~{q}'=2, \~{s}=0, 
\]
for which the scaling condition is satisfied, one gets the estimate
\[
 \|D^{\frac{1}{4}} u\|_{L^4_t L^\infty_x}\lesssim \|(u_0, u_1)\|_{\dot{H}^1\times L^2} + \|\Box u\|_{L^1_t L^2_x}.
\]
Applying this estimate to $D^\frac{3}{4}u$ leads to
\begin{equation}\label{St}
 \|D u\|_{L^4_t L^\infty_x}\lesssim \|(u_0, u_1)\|_{H^\frac{7}{4}\times H^\frac{3}{4}} + \|\Box u\|_{L^1_t H^\frac{3}{4}_x}.
\end{equation}

On the other hand, by the classical energy inequality for \eqref{main_eq_D} in $S_T=[0, T)\times \R^2$,
\[
 \|D u(t, \cdot)\|_{H^\frac{3}{4}}\lesssim \|u(0, \cdot)\|_{H^\frac{7}{4}}+\|\del_t u(0, \cdot)\|_{H^\frac{3}{4}}+\int_0^t \|(Du Du)(t', \cdot)\|_{H^\frac{3}{4}}\, dt'.
\]
But, by H\"{o}lder's inequality, the fractional Leibniz rule, and estimate \eqref{St},
\begin{align*}
 \int_0^t \|(Du Du)(t', \cdot)\|_{H^\frac{3}{4}}\, dt' & \leq\|\Box u\|_{L^1_t H^\frac{3}{4}_x}\\
 & \lesssim T^\frac{3}{4}\|Du\|_{L^4_t L^\infty_x}\|Du\|_{L_t^\infty H_x^\frac{3}{4}}\\
 & \lesssim T^\frac{3}{4} (\|(u_0, u_1)\|_{H^\frac{7}{4}\times H^\frac{3}{4}} + \|\Box u\|_{L^1_t H^\frac{3}{4}_x})\|Du\|_{L_t^\infty H^\frac{3}{4}}.
\end{align*}

Taking $T$ small, the last estimate combined with the previous energy inequality is enough to show the LWP for \eqref{main_eq_D} with data in $H^{\frac{7}{4}}\times H^{\frac{3}{4}}$ using the Pickard's iteration method. For the proper proof for a wave admissible pair $(p, q)$, one additionally needs to use the Sobolev's inequality for the spatial norm in the previous to last estimate in order to place $Du$ in $L_x^q$.

%%%%%%%%%%%%%%%%%%%%%%%%%%%%%%%%%%%%%%%%%%%%%%%%%%%%%%%%%%%%%%%%%%%%%%%%%%%%%%%%%%%%%%%%%%%%%%%%%%%%%%%%%%%%%%%%%%
%%%%%%%%%%%%%%%%%%%%%%%%%%%%%%%%%%%%%%%%%%%%%%%%%%%%%%%%%%%%%%%%%%%%%%%%%%%%%%%%%%%%%%%%%%%%%%%%%%%%%%%%%%%%%%%%%%
%%%%%%%%%%%%%%%%%%%%%%%%%%%%%%%%%%%%%%%%%%%%%%%%%%%%%%%%%%%%%%%%%%%%%%%%%%%%%%%%%%%%%%%%%%%%%%%%%%%%%%%%%%%%%%%%%%

\section{The general well-posedness theorem}\label{appB}
Below we state the general local well-posedness theorem for nonlinear wave equations with data in the Fourier-Lebesgue spaces $\^{H}_s^r\times \^{H}_{s-1}^r$. We only include the main idea of the proof here, as, with minor differences, it closely follows the proof of the analogous theorem for $L^2$ based spaces. For the proof in the $L^2$ case we refer the reader to \cite[Theorem 4.1]{selberg_thesis} (see also \cite[Theorem 5.3]{kl-sel:surrvey}).

Let us consider the Cauchy problem
\begin{align}
 & \Box u=F(u, \del u), \quad (t, x)\in \R^{1+n}, \label{NLW_eq}\\
 & (u, \del_t u)|_{t=0}=(f, g)\in \^{H}_s^r\times \^{H}_{s-1}^r \label{NLW_ic},
\end{align}
where $\del u$ stands for the space-time gradient of $u$, and $F$ is a smooth function with $F(0)=0$.

\begin{theorem}[c.f Theorem 14 in \cite{selberg_thesis}]\label{gen_LWP}
 Assume $s\in\R$, $\frac{1}{r}<b<1$, $\epsilon\in (0, 1-b)$. If
\begin{equation*}\label{F_b}
 \|F(u, \del u)\|_{X_{\sigma-1, b+\epsilon-1}^r}\leq A_\sigma (\|u\|_{Z_{s, b}^r})\|u\|_{Z_{\sigma, b}^r}
\end{equation*}
for all $\sigma\geq s$, and
\begin{equation*}\label{F_c}
 \|F(u, \del u)-F(v, \del v)\|_{X_{\sigma-1, b+\epsilon-1}^r}\leq A_s (\|u\|_{Z_{s, b}^r}+\|v\|_{Z_{s, b}^r})\|u-v\|_{Z_{\sigma, b}^r},
\end{equation*}
for all $u, v\in Z_{s, b}^r$, where $A_\sigma:\R_+\to\R_+$ is increasing and locally Lipschitz for every $\sigma\geq s$, then
\begin{itemize}
 \item (existence) There exists $u\in Z_{s, b}^r$, which solves \eqref{NLW_eq}-\eqref{NLW_ic} on $[0, T]\times \R^n$, where $T=T(\|f\|_{\^{H}_s^r}+\|g\|_{\^{H}_s^r})>0$ depends continuously on $\|f\|_{\^{H}_s^r}+\|g\|_{\^{H}_s^r}$.
 \item (uniqueness) The solution is unique in the class $Z_{s, b}^r$, i.e., if $u, v\in Z_{s, b}^r$ both solve \eqref{NLW_eq}-\eqref{NLW_ic} on $[0, T]\times \R^n$ for some $T>0$, then
\[
 u(t)=v(t) \qquad \text{for } t\in [0, T].
\]
 \item (Lipschitz) The solution map
\[
 (f, g)\mapsto u, \qquad \^{H}_s^r\times \^{H}_{s-1}^r\to Z_{s, b}^r
\]
is locally Lipschitz.
 \item (higher regularity) If the data has higher regularity
\[
 f\in \^{H}_\sigma^r, g\in \^{H}_{\sigma-1}^r \qquad \text{for } \sigma >s,
\]
then $u\in C([0, T], \^{H}_{\sigma}^r)\cap C^1([0, T], \^{H}_{\sigma-1}^r)$ for any $T>0$ for which $u$ solves \eqref{NLW_eq}-\eqref{NLW_ic}. In particular, if $(f, g)\in\sS$, then $u\in C^\infty ([0, T]\times \R^n)$.
\end{itemize}
\end{theorem}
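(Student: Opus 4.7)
The plan is to recast \eqref{NLW_eq}--\eqref{NLW_ic} as an integral equation via Duhamel's formula and then apply the Banach fixed-point theorem in a closed ball of the time-restricted space $Z_{s, b; T}^r$. Writing $W(t)(f, g)$ for the free wave propagator with data $(f, g)$ and $W_R F(t)=\int_0^t \frac{\sin((t-t')\sqrt{-\Delta})}{\sqrt{-\Delta}} F(t')\,dt'$ for the Duhamel operator, a solution on $[-T,T]\times \R^n$ is a fixed point of
\[
\Phi(u)(t)=W(t)(f,g)+W_R\bigl(F(u,\del u)\bigr)(t).
\]

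The first step is to prove the two linear $X_{s,b}^r$ estimates that drive the contraction:
\[
\|W(\cdot)(f,g)\|_{Z_{s,b;T}^r}\lesssim \|f\|_{\^{H}_s^r}+\|g\|_{\^{H}_{s-1}^r},\qquad \|W_R F\|_{Z_{s,b;T}^r}\lesssim T^\delta\,\|F\|_{X_{s-1,b+\epsilon-1;T}^r},
\]
for some $\delta=\delta(\epsilon, b, r)>0$. Both reduce to Fourier-side estimates after cutting off in time by a Schwartz function $\chi$. The homogeneous bound amounts to showing that $\|\<\tau\pm|\xi|\>^b \^{\chi}\bigl(T(\tau\pm|\xi|)\bigr)\|_{L^{r'}_\tau}$ is bounded uniformly in $\xi$, which only uses $b<1$. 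The Duhamel bound requires splitting the time-Fourier variable into the regions $\<|\tau|-|\xi|\>\lesssim T^{-1}$ and $\<|\tau|-|\xi|\>\gg T^{-1}$ and using both $b+\epsilon-1<0$ and $b>\tfrac{1}{r}$: the first inequality allows a positive power of $T$ to be extracted, while the second ensures that the remaining $L^{r'}_\tau$ integral converges. This is the only place where the Fourier-Lebesgue argument differs substantially from the $L^2$-based proof in \cite[Theorem 4.1]{selberg_thesis}: Plancherel is replaced by Hausdorff--Young on the time variable, which is the structural reason for imposing $b>\tfrac{1}{r}$ rather than the more familiar $b>\tfrac{1}{2}$, and this is the main technical obstacle in executing the plan.

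Granted these linear estimates, existence and uniqueness are routine. For $R\sim \|f\|_{\^{H}_s^r}+\|g\|_{\^{H}_{s-1}^r}$ and $u\in B_R\subset Z_{s,b;T}^r$, the first hypothesis of the theorem gives
\[
\|\Phi(u)\|_{Z_{s,b;T}^r}\leq CR + CT^\delta A_s(R)\,R,
\]
while the Lipschitz hypothesis gives
\[
\|\Phi(u)-\Phi(v)\|_{Z_{s,b;T}^r}\leq CT^\delta A_s(2R)\,\|u-v\|_{Z_{s,b;T}^r}.
\]
Choosing $T=T(R)$ so small that $CT^\delta A_s(2R)\leq \tfrac{1}{2}$ makes $\Phi$ a contraction on $B_R$, producing a unique fixed point. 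Continuity in time with values in $\^{H}_s^r$ is inherited from the embedding $X_{s,b;T}^r\subset C([-T,T],\^{H}_s^r)$ recorded in Section \ref{sec2}, so that $u\in C([-T,T],\^{H}_s^r)\cap C^1([-T,T],\^{H}_{s-1}^r)$. Lipschitz dependence on the data follows by applying the two linear estimates to the difference $\Phi(u)-\Phi(v)$ of two solutions corresponding to distinct data.

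Finally, persistence of higher regularity is obtained by running the same contraction in $Z_{\sigma,b;T}^r$ for $\sigma>s$: because the first hypothesis bounds $\|F(u,\del u)\|_{X_{\sigma-1,b+\epsilon-1}^r}$ by $A_\sigma(\|u\|_{Z_{s,b}^r})\|u\|_{Z_{\sigma,b}^r}$ with the prefactor depending only on the lower $Z_{s,b}^r$ norm, the local existence time $T=T(R)$ does not shrink when the regularity is raised. Hence the more regular fixed point necessarily coincides with the original $Z_{s,b;T}^r$ solution on $[-T,T]$, yielding the last statement, and the Schwartz case follows by iterating over $\sigma$.
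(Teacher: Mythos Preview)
Your proposal is correct and takes essentially the same approach as the paper: Duhamel's formula followed by a contraction argument in $Z_{s,b;T}^r$, driven by the linear estimate $\|\Phi u\|_{Z_{s,b}^r}\lesssim \|f\|_{\^{H}_s^r}+\|g\|_{\^{H}_{s-1}^r}+T^{\epsilon/2}\|F\|_{X_{s-1,b+\epsilon-1}^r}$, with the remaining items following from the hypotheses exactly as you describe. In fact the paper only records the main idea and defers the details to \cite{selberg_thesis}, so your sketch is, if anything, more explicit than the paper's own treatment.
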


The proof of the general LWP theorem relies on a fixed point argument by showing that the solution  map $\Phi$, given by
\[
 \Phi u(t) = \cos (tD)\cdot f + D^{-1} \sin (tD) g + \int_0^tD^{-1} \sin ((t-t')D)\cdot F(u, \del u)\, dt',
\]
is a contraction of some closed ball in $Z_{s, b; T}^r$ for appropriately chosen $T$. This is achieved through the following estimate for the solution of the inhomogeneous equation $\Box u=F(t, x)$ with initial data $(f, g)$,
\[
  \|\Phi u\|_{Z_{s, b}^r}\leq C\left (\|f\|_{\^{H}_s^r}+\|g\|_{\^{H}_{s-1}^r}+T^{\epsilon/2}\|F\|_{X_{s-1, b+\epsilon-1}^r}\right ).
\]
The above estimate exhibits a time factor in front of the inhomogeneity, if one is allowed to place $F$ in $X_{s-1, b+\epsilon-1}^r$, instead of the natural $X_{s-1, b-1}^r$. Exploiting the time factor in this estimate by choosing $T$ small, and relying on the estimates in the theorem, one can show that $\Phi$ is a contraction of some closed ball in the space $Z_{s, b; T}^r$, and hence has a fixed point.

%%%%%%%%%%%%%%%%%%%%%%%%%%%%%%%%%%%%%%%%%%%%%%%%%%%%%%%%%%%%%%%%%%%%%%%%%%%%%%%%%%%%%%%%%%%%%%%%%%%%%%%%%%%%%%%%%%
%%%%%%%%%%%%%%%%%%%%%%%%%%%%%%%%%%%%%%%%%%%%%%%%%%%%%%%%%%%%%%%%%%%%%%%%%%%%%%%%%%%%%%%%%%%%%%%%%%%%%%%%%%%%%%%%%%
%%%%%%%%%%%%%%%%%%%%%%%%%%%%%%%%%%%%%%%%%%%%%%%%%%%%%%%%%%%%%%%%%%%%%%%%%%%%%%%%%%%%%%%%%%%%%%%%%%%%%%%%%%%%%%%%%%

%%%%%%%%%%%%%%%%%%%%%%%%%%%%%%%%%%%%%%%%%%%%%%%%%%%%%%%%%%%%%%%%%%%%%%%%%%%%%%%%%%%%%%%%%%%%%%%%%%%%%%%%%%%%%%%%%%
%%%%%%%%%%%%%%%%%%%%%%%%%%%%%%%%%%%%%%%%%%%%%%%%%%%%%%%%%%%%%%%%%%%%%%%%%%%%%%%%%%%%%%%%%%%%%%%%%%%%%%%%%%%%%%%%%%
%%%%%%%%%%%%%%%%%%%%%%%%%%%%%%%%%%%%%%%%%%%%%%%%%%%%%%%%%%%%%%%%%%%%%%%%%%%%%%%%%%%%%%%%%%%%%%%%%%%%%%%%%%%%%%%%%%

\end{document}